\newtheorem{proposition}{Proposition}
\newtheorem{theorem}{Theorem}
\newtheorem{lemma}{Lemma}
\theoremstyle{definition}
\newtheorem{definition}{Definition}
\title{A mean-field game model of electricity market dynamics\footnote{We thank the anonymous reviewer for insightful comments on a previous version of this paper, and our research assistant Shiqi Lui for providing an implementaiton of the model with nonzero plant construction time. The research of Peter Tankov was supported by the FIME (Finance for Energy Markets) research initiative of the Institut Europlace de Finance. The research of Peter Tankov and Alicia Bassière was supported by the ANR (project EcoREES ANR-19-CE05-0042).}}
\author{Alicia Bassière\footnote{CREST, ENSAE, Institut Polytechnique de Paris}, Roxana Dumitrescu\footnote{King's College London} and Peter Tankov$^{\dag}$\footnote{Corresponding author, email: \texttt{peter.tankov@ensae.fr}}}
\date{}
\begin{document}

\maketitle

\begin{abstract}
We develop a model for the long-term dynamics of electricity market, based on mean-field games of optimal stopping. Our paper extends the recent contribution [Aïd, René, Roxana Dumitrescu, and Peter Tankov, ``The entry and exit game in the electricity markets: A mean-field game approach." Journal of Dynamics \& Games 8.4 (2021): 331] in several ways, making the model much more realistic, especially for describing the medium-term impacts of energy transition on electricity markets. 
In particular, we allow for an arbitrary number of technologies with endogenous fuel prices and enable the agents to both invest and divest. This makes it possible to describe the role of gas generation as a medium-term substitute for coal, to be replaced by renewable generation in the long term, and enables us to model the events like the 2022 energy price crisis. 
\end{abstract}

Key words: Mean-field games, optimal stopping, linear programming, electricity markets.

\section{Introduction}
The environmental transition of the world electricity industry is now well underway. According to the International Energy Agency (IEA), global wind and solar energy capacity increased by a factor of four between 2010 and 2020, reaching 8.4\% of the total electricity generation, while the share of coal-based electricity generation declined from 40.1\% to 35.1\% of the total over the same period, and this decline is bound to accelerate \citep{agency_oecd_2022}. However, the increasing share of renewable generation, and the closure of coal-fired power plants has also led to unintended consequences. On the one hand, low marginal costs of electricity generation push down the baseload electricity prices, eroding the profits of conventional producers, which are important for system stability. The capacity markets, developed in the recent years, aim to solve this problem by providing an additional stable source of revenues to conventional producers even when they are not actually generating electricity. On the other hand, high renewable penetration and the departure of coal increases the volatility of peakload prices, and makes the system more reliant on natural gas, which is often used as a substitute for coal due both to its lower environmental impact, and high ramping capacity of gas-fired power plants. As a result, an exogenous supply shock on gas prices may lead to a global energy price crisis sending shock waves across various sectors of industry, as we have witnessed in 2022.

The aim of this paper is to build a long-term model for the dynamics of the electricity industry, able to describe the energy transition and in particular to take into account the role of gas as a medium-term substitute for coal. Rather than using the ``central-planner" approach and looking for the cost-minimizing configuration, we assume that the dynamics of the electricity industry is the result of interaction of a large number of agents, each having its own objective, and look for the Nash equilibrium. Aiming for mathematical tractability, we adopt the technology of mean-field games, whereby the electricity industry is composed of a large number of small agents, and each agent is assumed to be a price-taker. 

In our model, there are $N$ types of conventional electricity producers (gas, coal, etc.) and $\overline N$ types of renewable electricity producers (solar, wind, etc.). Each conventional generator uses one of $K$ fuels (coal, gas, etc.) and pays a carbon price proportional to the emission intensity of the fuel used. The carbon price is exogenous and the fuel prices are determined endogenously, from an exogenous supply function and the aggregate demand of all producers using a given fuel. In addition to fuel costs and carbon price costs, each conventional producer faces idiosyncratic random costs, and determines the amount to offer in the market to maximize the instantaneous gain. Renewable producers, on the other hand, have a random capacity factor, do not pay fuel or carbon price costs and offer their entire (random) production in the market. 

The peak and off-peak electricity prices are determined from the renewable and conventional supply and an exogenous demand process through the merit order mechanism. Due to the mean-field game assumption, fuel and electricity price trajectories are deterministic in our model: there is no common noise. 

Each given class of producers may either contain the producers who are already in the process of constructing or operating the plant, or potential generation projects, for which the decision to enter the market is yet to be taken. These potential producers look for the optimal moment to enter the market (whereupon they move to the first category), while the producers who are already in the market look for the optimal time to exit. The optimal decision is taken by maximizing a gain functional, which depends on the future electricity and fuel price trajectories. We then look for the \emph{equilibrium} price trajectories: such that every agent follows the optimal entry/exit strategy and the fuel and electricity markets clear for the resulting distribution of agents at all times. Under suitable assumptions, we prove existence of equilibrium and uniqueness of the equilibrium price trajectories of electricity and fuels. An efficient fixed-point algorithm is then presented for computing the equilibrium. As a result of running the algorithm, we obtain the trajectories of the electricity price, fuel prices, installed generation capacity, and energy mix, for given trajectories of electricity demand and carbon price, and for given technology costs. The model allows to study the progressive substitution of different technologies (gas, coal, renewable) in the process of energy transition and the resulting effects on prices.
Modifying the parameters, we may compare the effect of different market design elements on electricity and fuel prices, renewable penetration and GHG emissions.

Our paper extends an earlier contribution \citep{aid2020entry} in a number of key directions. First, while \cite{aid2020entry} only considered two types of producers (gas and wind), we allow for an arbitrary number of producer types. More importantly, each producer who is not yet on the market may first decide to invest and then divest at a later date. Fuel prices are endogenously determined, while they were assumed to be exogenous in \citep{aid2020entry}. Finally, temporal aspects (such as finite plant age, finite construction time and realistic distribution of ages of plants already present in the market) are introduced in this paper. 

The purpose of this paper is to introduce the model and the numerical computation scheme, and illustrate its features on a stylized example. Detailed calibration of model features and parameters, as well as the analysis of results, will be addressed elsewhere. 

%\paragraph{Features of our long-term electricity market model}${}$

%\begin{itemize}
%    \item Merit order mechanism for electricity price formation based on exogenous electricity demand
%    \item Endogenous formation of fuel prices based on exogenous supply function 
%    \item Exogenous carbon price
%    \item Time-dependent technology costs (at least for wind)
%    \item Three types of producers : gas, coal, wind
%    \item Realistic bidding strategies of producers
%    \item Investments / divestments. An agent who invests should be able to divest at a later date
%    \item Fixed lifetime of a plant
%    \item Realistic age distribution for plants already present in the market
%    \item Fixed construction time for new projects
%    \item Risk aversion and strategic behavior of market participants
%    \item Randomness demand \& demand scenarios
%    \item Randomness in renewable production
%    \item Pouvoir de marché (gros agent)
%\end{itemize}

The paper is structured as follows. In the remaining part of the introduction we review the relevant literature. Section \ref{model.sec} presents our model of the electricity market. The relaxed solution approach and the main theoretical results are exposed in Section \ref{mfg.sec}. The numerical computation of the MFG solution and the examples are discussed in Section \ref{numerics.sec}. The proof of the main existence and uniqueness theorem is provided in the appendix.

\subsection{Literature review}

According to \citet{ventosa2005}, electricity market models can be classified into three main types: optimization models, market equilibrium models, and simulation models. We adopt this classification scheme to frame our literature review.

\paragraph{Optimization models}

Optimization models are widely used in operations research and engineering due to the availability of numerous solution algorithms that can easily incorporate technical constraints into the model. Typically, a representative company is modeled as a single agent whose objective is to minimize the cost of electricity production or maximize its profit. Technical constraints related to adequacy, storage, grid balancing, Kirchhoff laws, and generation units can be effectively integrated into such models. As a result, optimization models are particularly useful for addressing short-term problems.

Traditionally, price formation is considered exogenous or a function of the quantity offered by the optimizing agent. However, this type of model is generally unsuitable for representing competitive behavior, as it oversimplifies the strategic behavior of agents and behavioral biases such as risk aversion. Recent literature is increasingly trying to address this criticism \citep{petitet2017capacity}. Despite this, due to their mathematical simplicity, these models, which generally involve a linear problem (LP) or a mixed integer linear problem (MILP),  can be adapted for both deterministic and probabilistic approaches.

\paragraph{Market equilibrium models}

Market equilibrium models are widely used to analyze market mechanisms, market power, and public policies. These models incorporate all agents in competition, resulting in complex calculations, but allowing for strategic interactions to be represented. Equilibrium is formalized by a set of symmetric equations, where market participants generally aim to maximize their expected profits under market clearing conditions. Equilibrium models are particularly useful for medium-term problems, such as analyzing market entry dynamics.

Given their structure, equilibrium models are well-suited for introducing game theory concepts into the analysis of the electricity market. Common examples found in the literature include Cournot games, which involve competition in quantities, and Supply Function Equilibrium (SFE), which involves competition by bidding complete supply functions.

Cournot games have long been employed in the analysis of market power \citep[e.g.,][]{borenstein1999empirical}. In a Cournot equilibrium, firms simultaneously determine their output levels while taking their competitors' output as given. Each firm assumes that its rivals will produce a fixed quantity and then selects its output level to maximize its profits based on this assumption. However, the price predictions generated by the Cournot equilibrium are challenging to validate in practice \citep{baldick2004theory}.

SFE is a more realistic approach as it allows the introduction of technologies with different production and supply functions. SFE analysis has also been shown to be less sensitive to model parameters \citep{neuhoff2005network, willems2009cournot}. However, its greater mathematical complexity makes it unsustainable for a detailed market representation. More importantly, SFE models often have multiple equilibria, which can lead to unstable solutions \citep{willems2009cournot}.

The first example of SFE applied to the electricity market is \citet{green1992competition}. Another key difference between Cournot equilibrium and supply function equilibrium is that Cournot equilibrium assumes that firms are quantity setters, while supply function equilibrium assumes that firms are price takers. In other words, in a Cournot equilibrium, each firm sets its output level based on the expected market price, whereas in a supply function equilibrium, each firm sets its supply function based on the market price set by its competitors.

%Moreover, Cournot equilibrium has been demonstrated as vulnerable to sequential play and variable demand \citep{newbery2017strategic}.

\paragraph{Simulation models}

Simulation models are increasingly being used in economics as an alternative to equilibrium models, especially when the latter become difficult to solve using numerical tools and machine learning. Instead of calculating an equilibrium, simulation models represent behavior through a set of sequential rules. These models are highly flexible and can represent all agents as well as technical constraints found in optimization models. Moreover, simulation models are a useful tool for studying repeated interactions and introducing uncertainty (for example, \citet{petitet2017capacity} provide an excellent illustration of this).

The simulation model category encompasses agent-based models (ABMs), which are increasingly popular for dynamic market analysis. In ABMs, agents are modeled as goal-oriented and adaptive, improving their decision-making through interactions and adapting to changes in the environment. However, these models can be challenging to replicate and are based on case-specific assumptions rather than fully tractable sets of equations. As such, they are not well-suited for generalizable studies and should be used with caution. Additionally, ABMs are difficult to validate empirically and are highly sensitive to initial conditions \citep{weidlich2008critical}. Moreover, ABMs often lack qualitative insights into agents' behavior.

\paragraph{Entry and exit models for electricity market}

The dynamics of market entry have been a subject of research since the opening to competition in the 1990s. Newbery's Supply Function Equilibrium model was extended by \citet{newbery1998competition} to include entry into the electricity market. While \citet{chronopoulos2014duopolistic} investigated the dynamics of entry under risk aversion and uncertainty, their analysis was limited to a duopoly. Similarly, \citet{goto2008entry} and \citet{takashima2008entry} studied the strategic entry of firms under price uncertainty and competition, but only with two plants. This limitation arises due to the computational burden of finding an equilibrium with a high number of players. Even when such an equilibrium is found, many strong assumptions are needed to ensure uniqueness, such as agent symmetry \citep{allcott2012smart}.

Recent literature has increasingly emphasized the market impact of renewable energy entry \citep{green2015costs}. Meanwhile, \citet{aid2020entry} employed a mean-field game approach to investigate the entry and exit decisions of large populations, albeit with only two types of energy sources considered (conventional and renewable).

\paragraph{Mean-field games and their applications to economic and financial modeling}
Mean-field games (MFG) can be seen as limits of stochastic differential games with identical agents and symmetric interactions, when the number of agents goes to infinity. MFG were introduced in \cite{lasry2007mean}; an already extensive and quickly growing body of literature focuses on theory of these objects and their applications, in particular to economics and finance. The present paper has a particular focus on MFG of optimal stopping, recently studied by, e.g., \cite{bertucci2018optimal,bouveret2020mean,dumitrescu2021control}. In addition to the paper by \cite{aid2020entry}, already quoted above, other applications of mean-field theory (not necessarily optimal stopping) to energy markets  include, e.g., \cite{carmona2022mean,shrivats2022mean,alasseur2022mean,aid2020mckean}.

In the context of entry and exit dynamics, the MFG approach is particularly useful as it allows us to analyze the behavior of a large number of agents and derive insights into the collective behavior of the population. Entry and exit decisions are often influenced not only by the actions of other agents in the market, but also by randomness and uncertainty about future events, such as electricity prices, fuel costs, carbon prices, or investment costs. The MFG framework facilitates the introduction of a probabilistic approach with a large population compared to optimization models, which are usually limited to one agent. This method allows to model the evolution of the population over time accounting for stochastic factors that affect agents' decisions.
MFGs make it possible to introduce heterogeneity among agents in a more straightforward manner than traditional Cournot equilibrium \citep{chan2015bertrand}, while avoiding the computational complexity of Supply Function Equilibrium, by considering different measure flows for different classes of agents.

Finally, the MFG approach is more tractable than agent-based simulation models as it incorporates an equilibrium concept, whose existence and, to a certain extent, uniqueness can be shown rigorously.  However, it is worth noting that MFG can converge to an ABM when the intertemporal preference rate goes to infinity, that is, the anticipation of the agents vanishes \citep{bertucci2019some}.

\section{The model}
\label{model.sec}
%\paragraph{Data needs and sources}${}$
%\begin{itemize}
%    \item Capital costs and fixed costs for different plant types
%    \item Capacity factor dynamics for renewable power plants
%    \item Production costs and bidding strategies for conventional power plants
%    \item Electricity demand scenarios
%    \item Initial energy mix
%\end{itemize}
Although we are interested in the mean-field game model in this paper, we start by describing the setting with a finite number of agents to motivate the limit as the number of agents goes to infinity, discussed in the next section. 

\subsection{Power plants}

We assume that in the market there is a large number of electricity producers, using either one of $N$ available conventional technologies (coal, gas, etc.) or one of $\overline N$ available renewable technologies. We use the index $i$ to refer to different technologies and the index $j$ to refer to different producers using the same technology. Conventional technologies are numbered from $i=1$ to $i=N$ and renewable technologies are numbered from $i=N+1$ to $i=N+\overline N$.   We assume that there are $N_i$ producers using technology $i$. As several conventional technologies may use the same fuel, we assume that there are $K$ fuels, whose prices are denoted $P^1_t,\dots,P^K_t$, and that $i$-th technology uses fuel $k(i)$.

The life cycle of a power plant is modelled as follows. The decision to build the power plant is taken at time $\tau_1$. Then, at time $\tau_1 + T^i_0$ the plant starts to generate power. The plant is decommissioned at the latest at date $\tau_1 + T^i_1$, where $T^i_1>T^i_0$, but may be decommissioned earlier, at time $\tau_2\in[\tau_1,\tau_1 + T^i_1]$ if it is no longer profitable to operate it (for example, in the situation of asset stranding due, e.g., to the change in environmental regulations). In our model, the electricity market is simulated over the time interval $[0,T]$, and we assume that at time $t=0$, some conventional generation projects are operational or in the construction stage (in that case, their owner wants to optimally decide when to close down the plant, that is, choose $\tau_2$), while for others the decision to build the plant has not been taken yet, and in that case, the owner wants to optimally choose both the date $\tau_1$ for starting the construction and the date $\tau_2$ when the plant will be closed down. We denote by $\lambda_i$ the maximum capacity function of the plant: for example, the above life cycle description corresponds to $\lambda_i(t) = \mathbf 1_{T^i_0\leq t\leq T^i_1}$. In the proofs of our theorems, for technical reasons, we shall assume that $\lambda_i$ is a smooth function, that is, the maximum capacity increases and decreases progressively over a short time interval. This smoothness requirement affects the maximum capacity but not the actual production: each individual plant can decide at any time to stop production immediately, or to increase the output from zero to any fraction of maximum capacity.

\subsection{Conventional producers}
Each conventional producer has marginal cost function $C^{ij}_t:[0,1]\to
\mathbb R$: 
$C^{ij}_t(\xi)$ is the unit cost of increasing capacity if operating at $\xi$

{We assume 
$$
C^{ij}_t(\xi) = f_i e_{k(i)} P^C_t+f_i P^{k(i)}_t + Z^{ij}_t + c^i(\xi),
$$
where $P^C$ is the emission price, $e_{k(i)}$ is the carbon intensity of one unit of fuel $k(i)$, $P^{k(i)}_t$ is the price of fuel $k(i)$, $f_i$ is the conversion ratio (quantity of fuel needed to produce one unit of electricity with technology $i$),} $Z^{ij}$ is a random process which reflects all other costs of the producer when operating at low capacity, and which follows the dynamics
\begin{align}
d Z^{ij}_t = k^i(\theta^i- Z^{ij}_t) dt + \delta^i \sqrt{Z^{ij}_t} dW^{ij}_t,\quad Z^{ij}_0= z_{ij},\label{convdyn}
\end{align}
and $c^i:\mathbb R_+ \to [0,1]$ is increasing smooth function with
$c^i(0)=0$, which reflects additional capacity-dependent costs.  

For a given electricity price $p$, the producer, who is a price taker in our model, will therefore offer in the market the capacity $\xi^*$ such that $C^{ij}_t(\xi^*) = p$, in other words,
{$$
\xi^* = F_i(p -f_i e_{k(i)} P^C_t-f_i P^{k(i)}_t - Z^{ij}_t  ),
$$
where $F_i$ is the inverse function of $c^i$. We assume that $c^i$ is such that $F_i$ is twice differentiable with bounded derivatives for each $i$. The instantaneous gain of the producer at price level $p$ is then given by
\begin{align}
\int_0^{\xi^*} (p-C^{ij}_t(\xi))d\xi = G_i(p- e_{k(i)} P^C_t+P^{k(i)}_t + Z^{ij}_t ),\label{gain.eq}
\end{align}}
where
$$
G_i(x) = \int_0^x F_i(z) dz, \quad x\geq 0,\qquad G_i(x) = 0, \quad x<0.
$$
To obtain the equality \eqref{gain.eq}, we used integration by parts and change of variables. 

To make the model more realistic, we distinguish peak and off-peak electricity demand and price. The peak demand/price correspond to the electricity consumption on weekdays from 7AM to 8PM, and we introduce the coefficients $c_p = \frac{65}{168}$ and $c_{op} = 1-c_p$.  Peak and off-peak prices will be denoted by $P^p_t$ and $P^{op}_t$ respectively.

The agent aims to maximize the quantity{
\begin{multline}
\mathbb E\Big[\int_{\tau_1}^{\tau_2} e^{-\rho
  t}\lambda_i(t-\tau_1)(c_p G_i(P^p_t -f_i  e_{k(i)}P^C_t-f_i P^{k(i)}_t - Z^{ij}_t )\\ +c_{op} G_i(P^{op}_t - f_i e_{k(i)} P^C_t-f_i P^{k(i)}_t - Z^{ij}_t )-\kappa_i) dt \\ - K_i e^{-(\rho+\gamma_i)\tau_1} +
  \widetilde K_i e^{-(\rho+\gamma_i) \tau_2 } \Big]\label{optconv}%\\
  %& = \mathbb E\Big[\int_{\tau_1}^{\tau_2} e^{-\rho
  %t}\Big\{\lambda_i(t-\tau_1)(c_p G_i(P^p_t - e_i P^C_t-f_i P^{k(i)}_t- Z^{ij}_t )\\&\qquad\qquad + c_{op} G_i(P^{op}_t - e_i P^C_t-f_i P^{k(i)}_t- Z^{ij}_t )-\kappa_i)  \\ &\qquad \qquad - (\rho+\gamma_i) \widetilde K_i e^{-\gamma_i t }\mathbf 1_{t-\tau_1\leq T_1}\Big\}dt + (K_i - \widetilde K_i)(\rho+\gamma_i) \int_0^{\tau_1} e^{-(\rho+\gamma_i)t}dt\Big]
\end{multline}}
If the plant is not yet operational/under construction, the optimization is carried out over 
over all $\tau_1,\tau_2\in \mathcal T([0,T])$ such that $\tau_1\leq \tau_2$, otherwise the optimization is only over $\tau_2\in \mathcal T([0,T])$.
Here, $\kappa_i$ is the fixed cost per
unit of time that the producer pays until exiting the market, $K_i$ is the capital cost of building the plant with
technology $i$, $\gamma_i$ is the rate of decay of capital costs and $\widetilde K_i$ is the value recovered when the plant is closed down (scrap value).  
%\end{itemize}
All costs are normalized to one unit of capacity.

\subsection{Renewable producers}
%We use the 'overline' notation to refer to quantities for renewable plants which are similar to those for conventional plants. 
%We assume that in the market there is a large number of renewable producers, using one of $\overline N$ available renewable technologies (onshore wind, offshore wind etc.). The renewable technologies are numbered with numbers from $N+1$ to $N+\overline N$, and once again, we assume that there are $N_i$ renewable producers using technology $i$.  Similarly to conventional power plants, the life cycle of a renewable power plant is modelled as follows. The decision to build the plant is taken at time $\bar\tau_1$. The plant starts to generate power at time $\bar\tau_1 + \overline T^i_0$ and is decommissioned at date $\overline T^i_1$. We assume that renewable power plants are never decommissioned before the end of their lifetime. At time $t=0$ some renewable plants are operational or in the construction stage, and for these plants there is no decision to make, while for others the decision to build has not been taken yet, and the owner wants to optimally choose the date $\bar\tau_1$. We denote by $\bar\lambda_i$ the indicator function of the production interval: $\bar\lambda_i(t) = \mathbf 1_{\overline T^i_0\leq t\leq \overline T^i_1}$. 

Each renewable power plant in operation generates $S^{ij}_t\in (0,1)$ units of electricity per unit
  time at zero cost, where 
\begin{align}
dS^{ij}_t = k^i(\theta^i- S^{ij}_t) dt + \delta^i \sqrt{S^{ij}_t(1- S^{ij}_t)}
d\overline W^{ij}_t,\quad S^{ij}_0
= s_{ij}\in (0,1).\label{rendyn}
\end{align}

The renewable producers always bid their
  full intermittent capacity. The owners of potential renewable project which have not yet entered the market therefore optimize the quantity
\begin{multline}
 \mathbb E\Big[\int_{\tau_1}^{\tau_2} e^{-\rho t}\lambda_i(t-\tau_1)(
  (c_p P^p_t+c_{op}P^{op}_t) S^i_t - {\kappa_i}) dt \\-{K}_i
e^{-(\rho+\gamma_i) \tau_1} + \widetilde{K}_i  e^{-(\rho+\gamma_i)\tau_2}\Big]\label{optren}%\\
  %& = \mathbb E\Big[\int_{\bar\tau_1}^T e^{-\rho t}\left\{\bar\lambda_i(t-\bar\tau_1)(
  %(c_p P^p_t+c_{op}P^{op}_t) S^i_t - {\bar\kappa_i})  - (\rho + \bar\gamma_i)\widetilde{\overline K}_i
   % e^{-\bar\gamma_i t }\mathbf
   % 1_{t-\bar\tau_1\leq \overline T_1 }\right\}dt\notag\\ &+(\overline K_i - \widetilde{\overline K}_i)(\rho+\bar\gamma_i) \int_0^{\tau_1} e^{-(\rho+\bar\gamma_i)t}dt\Big]
\end{multline}
If the plant is not yet operational/under construction, the optimization is carried out over 
over all $\tau_1,\tau_2\in \mathcal T([0,T])$ such that $\tau_1\leq \tau_2$, otherwise the optimization is only over $\tau_2\in \mathcal T([0,T])$. Although in the model it is possible to decommission a renewable plant before the end of its lifetime, in practice with realistic parameter value this will never be optimal. 
%where {$\overline K_i$ is the
%fixed cost}, {$\bar\kappa_i$ is the running cost}, $\bar\gamma_i$ is the
%depreciation rate and $\bar\gamma_i$ is the rate of at which capital costs decrease before investment, and $\widetilde{\overline K}_i$ is the scrap value of the plant. Here again, all costs are normalized to one unit of installed capacity. 

\subsection{Price formation}

As mentioned above, we distinguish peak and off-peak electricity demand.  The peak and off-peak demand will be denoted by $D^p_t$ and $D^{op}_t$. Both are normalized to one hour, so that the total demand is given by $D_t = c_p D^p_t + c_{op} D^{op}_t$. 

The consumption of fuel of type $k$  at time $t$ when the electricity price equals $P^E$ and the fuel price equals $P^k$ writes
$$
\Psi^k_t (P^E, P^k)=\sum_{i: k(i)=k}\sum_{j=1}^{N_i} \lambda_i(t-\tau_1^{ij}) \mathbf 1_{\tau_2^{ij}>t} f_i Q_{ij} F_i(P^E - f_i e_{k(i)} P^C - f_i P^k - Z^{ij}_t), %\omega^i_t (dx),\quad \text{where}\quad   \omega^i_t(A):=\sum_{j}\lambda_i(t-\tau^{ij}_1) \mathbf 1_{\widetilde C^{ij}_t\in A} \mathbf 1_{\tau^{ij}_2> t}
$$
where $Q_{ij}$ is the capacity of plant $j$ using technology $i$, while the total electricity generation using fuel $k$ is 
$$
 F^k_t (P^E, P^k)=\sum_{i: k(i)=k}\sum_{j=1}^{N_i} \lambda_i(t-\tau_1^{ij}) \mathbf 1_{\tau_2^{ij}>t}  Q_{ij} F_i(P^E - f_i e_{k(i)} P^C - f_i P^k - Z^{ij}_t).
$$

We assume that the fuel price is determined by matching the consumption with an exogenous supply function:
\begin{align}
c_p \Psi^k_t(P^p_t,P^k_t) + c_{op} \Psi^k_t(P^{op}_t,P^k_t) = \Phi_k(P^k_t).  \label{fprice}
\end{align}
We denote by $R_t$ the total supply of electricity from renewable sources:
$$
R_t = \sum_{i=N+1}^{N+\overline N} \frac{1}{ N_i}\sum_{j=1}^{ N_i}  Q_{ij}\lambda(t-\tau^{ij}_1)  S^{ij}_t,
$$
where $ Q_{ij}$ for $i=N+1,\dots,\overline N$ is the installed capacity of plant $j$ using renewable technology $i$. 

The peak and off-peak electricity prices satisfy the following equations:
\begin{align}
&(D^p_t  - R_t)^+ = F_0(P^p_t) + \sum_{k=1}^K F^k_t(P^p_t, P^k_t),\label{eprice}\\
&\text{or} \quad (D^p_t-R_t)^+  >  F_0(P^p_t) + \sum_{k=1}^K F^k_t(P^p_t, P^k_t)\quad \text{and} \quad P^p_t = P^*.\notag
\end{align}
and similarly for $P^{op}_t$. Here $F_0$ is the exogenous \emph{baseline supply} function, which is increasing and satisfies $F_0(0) = 0$, and $P^*$ is the price cap in the market. The second inequality corresponds to the so called loss of load situation, when the price cap is reached and the demand is not entirely satisfied by the production. 

To summarize, in our model, the CO2 emissions price is exogenous, and the the prices of fuels and electricity are determined endogenously, by solving equations \eqref{fprice} and \eqref{eprice}. 

\section{Entry and exit mean-field game}
\label{mfg.sec}
\subsection{Definition of the mean-field setting}
To formalize and solve the model of the previous section in a mean-field game context, we consider a mean-field game of optimal stopping with several classes of agents, where agents can change class, that is, when agents in one class stop, they move to another class. {\color{black}For a given technology $i$, the agents will therefore belong to one of two classes: the class of plants for which the decision to build has not been taken yet (denoted by $\widehat C_i$), and the class of plants, which are operational or under construction (denoted by $C_i$). Upon making the decision to build the plant, the agent in the class $\widehat C_i$ moves to the class $C_i$. 

Having in mind our specific application, and taking into account the fact that the age variable only becomes relevant once the decision to build the plant has been made, we assume that the state of an agent of class $\widehat C_i$ belongs to $\mathcal O_i$, and that the state of an agent of class $C_i$ belongs to $\mathcal A\times \mathcal O_i$, where $\mathcal A$ is a compact subset of $\mathbb R$ and $\mathcal O_i$ is an open subset of $\mathbb R$ (the first component will model the age of the plant, and the second component will correspond to the state process of the plant, that is, the cost process for conventional plants and the capacity factor process for renewable plants)}. We denote by $\mathcal V_i$ the space of positive measure flows on $\mathcal A\times \overline{\mathcal O}_i$ such that for $(m_t)_{0\leq t\leq T}$, 
$$
\int_0^T m_t(\mathcal A\times \overline{\mathcal O}_i) dt <\infty,
$$
and by $\mathcal M_i$ the space of finite positive measures on $[0,T]\times \mathcal A \times \overline {\mathcal O}_i$. Similarly, we denote by $\widehat{\mathcal V}_i$ the space of positive measure flows on $ \overline{\mathcal O}_i$ such that for $(m_t)_{0\leq t\leq T}$, 
$$
\int_0^T m_t(  \overline{\mathcal O}_i) dt <\infty,
$$
and by $\widehat{\mathcal M}_i$ the space of finite positive measures on $[0,T]\times \overline {\mathcal O}_i$. 

\textcolor{black}{When the number of producers is finite, the class $C_i$ is characterized by a triple $(\nu^{i,N_i},\mu^{i,N_i},(m^{i,N_i}_t)_{0\leq t\leq T})$, where $\nu^{i,N_i}$ and $\mu^{i,N_i}$ are random measures taking values in $\mathcal M_i$ and $(m^{i,N_i}_t)_{0\leq t\leq T}$ is a flow of random measures taking values in $\mathcal V_i$. The measure $\nu^{i,N_i}$ will be referred to as the \emph{entry measure}, the measure $\mu^{i,N_i}$ as the \emph{exit measure}, and the measure flow $m^{i,N_i}_t$ as the \emph{occupation measure flow}. Similarly, the class $\widehat C_i$ is characterized by a triple $(\hat\nu^{i,N_i},\hat\mu^{i,N_i},(\hat m^{i,N_i}_t)_{0\leq t\leq T})$, where $\hat\nu^{i,N_i}$ and $\hat\mu^{i,N_i}$ are random measures taking values in $\widehat{\mathcal M}_i$ and $(\hat m^{i,N_i}_t)_{0\leq t\leq T}$ is a flow of random measures taking values in $\widehat{\mathcal V}_i$.}

\textcolor{black}{Recall that, for a given technology $i$,  the $j$-th conventional producer is operational or under construction between $\tau_1^{ij}$ and $\tau_2^{ij}$ and has capacity $Q_{ij}$. %We introduce the age process $A_t^{ij}$ defined as $A_t^{ij}=t-\tau_1^{ij}$. 
The random measures and measure flows characterizing classes $C_i$ and $\widehat C_i$, for $i=1,\ldots, N$ are defined as follows:
\begin{align*}
m_t^{i,N_{i}}(da,dx)&=\sum_{j=1}^{N_i} Q_{ij} \delta_{(t-\tau^{ij}_1,Z_t^{ij})}(da,dx) \textbf{1}_{\tau_1^{ij}<t
\leq\tau_2^{ij}},\\
\mu^{i,N_i}(dt,da,dx)&= \sum_{j=1}^{N_i} Q_{ij} \delta_{(\tau_2^{ij},\tau_2^{ij}-\tau^{ij}_1,Z_{\tau_2^{ij}})}(dt,da,dx),\\
\nu^{i,N_i}(dt,da,dx)&= \sum_{j=1}^{N_i} Q_{ij} \delta_{(\tau_1^{ij},Z_{\tau_1^{ij}})}(dt,dx)\delta_0(da)+\sum_{j=1}^{N_i} Q_{ij} \delta_{(-\tau^{ij}_1,Z_{0}^{ij})}(da,dx)\delta_0(dt),\\
\hat{\mu}^{i,N_i}(dt,dx)&= \sum_{j=1}^{N_i} Q_{ij} \delta_{(\tau_1^{ij},Z_{\tau_1^{ij}})}(dt,dx),\\
\hat{m}_t^{i,N_{i}}(dx)&=\sum_{j=1}^{N_i} Q_{ij} \delta_{(Z_t^{ij})}(dx) \textbf{1}_{t\leq\tau_1^{ij}},\\
\hat{\nu}^{i,N_i}(dt,dx)&= \sum_{j=1}^{N_i} Q_{ij} \mathbf 1_{\tau^{ij}_1>0}\delta_{Z_{0}^{ij}}(dx)\delta_{0}(dt).\end{align*}
The random measures and measure flows characterizing classes $C_i$ and $\widehat C_i$, for $i=N+1,\ldots, N+\bar{N}$ are defined as above, by replacing the process $Z^{ij}$ by $S^{ij}$.}

{\color{black}In the mean-field game limit, we assume that the number of agents in each class, $N_i$, tends to infinity, and the capacity of each agent, $Q_{ij}$, tends to zero, such that, for each technology $i$, one can define a non-degenerate limiting distribution of states of producers who are yet to enter the market $\hat\nu_0^i$ and a limiting distribution of ages and states of producers who are under construction or operational $\nu^i_0$:
\begin{align}
\sum_{j=1}^{N_i} Q_{ij} \mathbf 1_{\tau^{ij}_1>0}\delta_{(Z^{ij}_0)}\xrightarrow[N_i \to \infty]{d} \hat \nu^i_0,\qquad \sum_{j=1}^{N_i} Q_{ij} \delta_{(-\tau^{ij}_1,Z^{ij}_0)}\xrightarrow[N_i \to \infty]{d}  \nu^i_0.\label{limQ}
\end{align}
}

{\color{black} In the mean-field limit, we define the state process of the representative conventional agent
$$
d Z^{i}_t = k^i(\theta^i- Z^{i}_t) dt + \delta^i \sqrt{Z^{i}_t} dW^{i}_t,
$$
and introduce the deterministic counterparts
$(\nu^{i},\mu^{i},(m^{i}_t)_{0\leq t\leq T})\in \mathcal M_i \times \mathcal M_i \times \mathcal V_i$ and $(\hat\nu^{i},\hat\mu^{i},(\hat m^{i}_t)_{0\leq t\leq T})\in \widehat{\mathcal M}_i \times \widehat{\mathcal M}_i \times \widehat{\mathcal V}_i$, of the random triples defined above:
\begin{align}
m_t^{i}(da,dx)&=\int_{\mathcal A \times \overline{\mathcal O}_i}\nu^i_0(da',dx') \mathbb E^{(x')}[\delta_{(a'+t,Z_t^{i})}(da,dx) \textbf{1}_{t
\leq\tau_2^{i}}]\notag\\&\qquad \qquad+\int_{\overline{\mathcal O}_i}\hat\nu^i_0(dx') \mathbb E^{(x')}[\delta_{(t-\tau^i_1,Z_t^{i})}(da,dx) \textbf{1}_{\tau^i_1< t
\leq\tau_2^{i}}],\label{ocbegin}\\
\mu^{i}(dt,da,dx)&= \int_{\mathcal A \times \overline{\mathcal O}_i}\nu^i_0(da',dx')\mathbb E^{(x')}[\delta_{(\tau_2^{i},\tau_2^{i}+a',Z_{\tau_2^{i}})}(dt,da,dx)]\notag\\&\qquad\qquad+\int_{ \overline{\mathcal O}_i}\hat\nu^i_0(dx')\mathbb E^{(x')}[\delta_{(\tau_2^{i},\tau_2^{i}-\tau_1^i,Z_{\tau_2^{i}})}(dt,da,dx)],\\
\nu^{i}(dt,da,dx)&= \int_{ \overline{\mathcal O}_i}\hat\nu^i_0(dx')\mathbb E^{(x')}[\delta_{(\tau_1^{i},Z_{\tau_1^{i}})}(dt,dx)]\delta_0(da)+\nu^i_0 (da,dx)\delta_0(dt),\\
\hat{\mu}^{i}(dt,dx)&= \int_{\overline{\mathcal O}_i} \hat\nu^i_0(dx')\mathbb E^{(x')}[\delta_{(\tau_1^{i},Z_{\tau_1^{i}})}(dt,dx)].\\
\hat{m}_t^{i}(dx)&= \int_{\overline{\mathcal O}_i} \hat\nu^i_0(dx')\mathbb E^{(x')}[ \delta_{(Z_t^{i})}(dx) \textbf{1}_{t\leq\tau_1^{i}}].\\
\hat{\nu}^{i}(dt,dx)&= \hat\nu^i_0(dx)\delta_{0}(dt).\label{ocend}
\end{align}
for $i=1,\dots,N$, here
 $\mathbb E^{(x')}$ denotes the expectation under the probability measure where $Z^i_0 = x'$. The triples for $i=N+1,\dots, N+\overline N$ are defined similarly, replacing the cost process for conventional agent $Z^i$ by the capacity factor process for renewable agents $S^i$.}

These deterministic measures and measure flows satisfy a linear constraint, which we now describe. 

Let $\widehat{\mathcal L}_i$ be the infinitesimal generator of the state process of the representative agent and $\mathcal L_i$ be the infinitesimal generator extended to the age variable, that is, for a function $u\in C^{1,1,2}_b([0,T]\times \mathcal A \times \overline{\mathcal O}_i)$, 
$$
\mathcal L_i u = \widehat{\mathcal L}_i u + \frac{\partial u}{\partial a}.
$$
{\color{black}For the conventional producers the infinitesimal generator of the state (cost) process takes the form
$$
\widehat{\mathcal L}_i u = k^i(\theta^i-x) \frac{\partial u}{\partial x} + \frac{(\delta^i)^2 x}{2} \frac{\partial^2 u}{\partial x^2},\quad i=1,\dots, N,
$$
while for the renewable producers, the state (capacity factor) process takes the form 
$$
\widehat{\mathcal L}_i u = k^i(\theta^i-x) \frac{\partial u}{\partial x} + \frac{(\delta^i)^2 x(1-x)}{2} \frac{\partial^2 u}{\partial x^2},\quad i= N+1,\dots,N+\overline N.
$$}

\textcolor{black}{For any stopping times $\tau_1^{i}, \tau_2^{i} \in \mathcal{T}([0,T])$ and test function $u \in C^{1,1,2}_b([0,T]\times \mathcal A \times \overline{\mathcal O}_i)$, by applying Itô's formula between $\tau_1^{i}$ and $\tau_2^{i}$ to $u(t,t-\tau^i_1,Z^{i}_t)$, we obtain 
\begin{multline*}u(\tau_2^{i},\tau_2^{i}-\tau^i_1, Z^i_{\tau_2^{i}}) = 
u(\tau_1^{i},0, Z^i_{\tau_1^{i}})\\+\int_{\tau_1^{i}}^{\tau_2^{i}}\left\{\frac{\partial u}{\partial t} + \mathcal L_i u\right\}(t,t-\tau^i_1,Z^i_t)\, dt+\int_{\tau_1^{i}}^{\tau_2^{i}}\delta^{i}\sqrt{Z_t^{i}}\partial_xu(t,t-\tau^i_1, Z_t^{i})dW_t^{i},
\end{multline*}
whereas by applying the Itô formula between $0$ and $\tau^i_2$ to $u(t,t+a',Z^i_t)$ (for some constant $a'\geq0$), we get:
\begin{multline*}u(\tau_2^{i},\tau_2^{i}+a', Z^i_{\tau_2^{i}}) = 
u(0,a', Z^i_0)\\+\int_{0}^{\tau_2^{i}}\left\{\frac{\partial u}{\partial t} + \mathcal L_i u\right\}(t,t+a',Z^i_t)\, dt+\int_{0}^{\tau_2^{i}}\delta^{i}\sqrt{Z_t^{i}}\partial_xu(t,t+a', Z_t^{i})dW_t^{i}.
\end{multline*}
By taking the expectation of both equations, assuming that $Z^i_0 = x'$,  we get:
\begin{align*}
\mathbb E^{(x')}[u(\tau_2^{i},\tau_2^{i}-\tau^i_1, Z^i_{\tau_2^{i}})] &= 
\mathbb E^{(x')}[u(\tau_1^{i},0, Z^i_{\tau_1^{i}})]\\&+\int_0^T \mathbb E^{(x')}\left[\left\{\frac{\partial u}{\partial t} + \mathcal L_i u\right\}(t,t-\tau^i_1,Z^i_t)\mathbf 1_{\tau^i_1<t\leq \tau^i_2}\right] dt\\
\mathbb E^{(x')}[u(\tau_2^{i},\tau_2^{i}+a', Z^i_{\tau_2^{i}})] &= 
u(0,a',x')\\&+\int_0^T \mathbb E^{(x')}\left[\left\{\frac{\partial u}{\partial t} + \mathcal L_i u\right\}(t,t+a',Z^i_t)\mathbf 1_{t\leq \tau^i_2}\right] dt,\end{align*}
which can be equivalently written as
\begin{align*}
&\int_{[0,T]\times \mathcal A \times \overline{\mathcal  O}_i} u(t,a,x) \mathbb E^{(x')}[\delta_{(\tau_2^{i},\tau_2^{i}-\tau^i_1, Z^i_{\tau_2^{i}})}(dt,da,dx)] \\ & = \int_{[0,T]\times \mathcal A \times \overline{\mathcal  O}_i} u(t,a,x) \mathbb E^{(x')}[\delta_{(\tau^i_1, Z^i_{\tau_1^{i}})}(dt,dx)]\delta_0(da)\\
 &\qquad \qquad+ \int_{[0,T]\times \mathcal A \times \overline{\mathcal  O}_i}\left\{\frac{\partial u}{\partial t} + \mathcal L_i u\right\}\mathbb E^{(x')}[\delta_{(t-\tau^i_1,Z^i_t)}(da,dx)\mathbf 1_{\tau^i_1<t\leq \tau^i_2}]\\
 &\int_{[0,T]\times \mathcal A \times \overline{\mathcal  O}_i} u(t,a,x) \mathbb E^{(x')}[\delta_{(\tau^i_2,\tau^i_2 + a',Z_{\tau^i_2})}(dt,da,dx)] \\ &= \int_{[0,T]\times \mathcal A \times \overline{\mathcal  O}_i} u(t,a,x) \delta_{(0,a',x')}(dt,da,dx)\\
 &\qquad \qquad  + \int_{[0,T]\times \mathcal A \times \overline{\mathcal  O}_i}\left\{\frac{\partial u}{\partial t} + \mathcal L_i u\right\}\mathbb E^{(x')}[\delta_{(t+a,Z^i_t)}(da,dx)\mathbf 1_{t\leq \tau^i_2}]. 
\end{align*}
Integrating the first equation with respect to $\hat \nu^i_0(dx')$, the second one with respect to $\nu^i_0(da',dx')$ adding them together, 
 and using the definition of the measures $(\nu^{i}, \mu^{i}, (m_t^{i})_{0 \leq t \leq T})$, we get that these measures satisfy the linear programming constraint }
\begin{multline}\label{eq}
\int_{[0,T]\times \mathcal A \times \overline{\mathcal  O}_i} u(t,a,x) \nu^i(dt,da,dx) + \int_{[0,T]\times \mathcal A \times \overline{\mathcal  O}_i} \left\{\frac{\partial u}{\partial t} + \mathcal L_i u\right\}m^i_t(da,dx)\, dt \\= \int_{[0,T]\times \mathcal A \times \overline{\mathcal  O}_i} u(t,a,x) \mu^i(dt,da,dx)
\end{multline}
for all $u\in C^{1,1,2}_b([0,T]\times \mathcal A \times \overline{\mathcal O}_i)$. A similar argument shows that the triple $(\hat\nu^i,\hat\mu^i,(\hat m^i_t)_{0\leq t\leq T})$ satisfies the constraint 
\begin{multline}\label{eqhat}
\int_{[0,T]\times \overline{\mathcal  O}_i} u(t,x) \hat\nu^i(dt,dx) + \int_{[0,T]\times \overline{\mathcal  O}_i} \left\{\frac{\partial u}{\partial t} + \widehat{\mathcal L}_i u\right\}\hat m^i_t(dx)\, dt \\= \int_{[0,T] \times \overline{\mathcal  O}_i} u(t,x) \hat\mu^i(dt,dx)
\end{multline}
for all $u\in C^{1,2}_b([0,T]\times \overline{\mathcal O}_i)$.

Since moves are possible only from class $\widehat C_i$ to class $C_i$, the entry measures satisfy the following constraints:
\begin{align}
\hat\nu^i(dt,dx) &= \hat \nu^{i}_0(dx)\, \delta_0(dt),\label{constnuhat}\\ \nu^i(dt,da,dx) &= \nu^{i}_0(da,dx)\, \delta_0(dt) + \hat \mu^i(dt,dx)\,\delta_0(da). \label{constnu}
\end{align}
Recall that here $\hat \nu^{i}_0(dx)$ denotes the initial distribution of states of agents in class $\widehat C_i$, and $\nu^{i}_0(da,dx)$ denotes the initial distribution of ages and states of agents in class $C_i$. 
In other words, the entry measure of the class $\widehat C_i$ is only given by the initial condition, while the entry measure of the class $C_i$ is given by the initial condition plus the exit measure of the class $\widehat C_i$.

{\color{black}The relaxed linear programming approach to mean-field games, developed in \citep{bouveret2020mean,dumitrescu2021control} and described in the rest of this section in the specific context of the model of this paper, consists in looking for Nash equilibrium over all measures satisfying the linear constraints (\ref{eq}--\ref{constnu}) rather than just the occupation measures defined in (\ref{ocbegin}--\ref{ocend}). }

%If, from class $C_i$, exit is possible at any time, there are no additional constraints to impose on the exit measure $\mu_i$. If, on the other hand, exit is only allowed at the terminal time $T$, the measure $\mu_i$ must be supported by the set $\{T\}\times \mathcal A \times \mathcal O_i$. 

 \subsection{Price formation}
 %Having in mind once again our application, we assume that the first $N$ classes correspond to conventional producers, and the remaining classes correspond to renewable producers. For each $i$, the class $\widehat C_i$ corresponds to plants  for which the decision to build has not been taken yet, while the class $C_i$ corresponds to plants which are operational or under construction. 
Using the notation of this section, in the limiting setting of mean-field games, the electricity supply from conventional
agents using fuel $k$ at time $t$, with electricity price level $P^E$ and fuel price $P^k$ is given by
$$
F^k_t(P^E, P^k) = 
 \sum_{i:k(i)=k}\int_{\mathcal A\times  \overline{\mathcal O}_i} m^i_t(da,dx)
\lambda_i (a) F_i(P^E -f_i  e_k P^C - f_i P^k - x),
$$
while the consumption of fuel $k$ writes
$$
\Psi^k_t(P^E,P^k)=\sum_{i:k(i)=k}\int_{\mathcal A\times  \overline{\mathcal O}_i} m^i_t(da,dx)
\lambda_i (a) f_i F_i(P^E -f_i  e_k P^C - f_i P^k - x)
$$
and the supply from renewable producers is given by
$$
R_t = \sum_{i=N+1}^{N+\overline N} \int_{\mathcal A \times  \overline{\mathcal O}_i} m^i_t(da,dx)
x\,\lambda_i (a),
$$

The electricity and fuel prices are then determined by solving the equations \eqref{fprice}-\eqref{eprice}. 

The following proposition establishes the existence and uniqueness of prices obtained with this procedure, and provides a convenient way to compute them. 
\begin{proposition}\label{price.prop}
Let $\overline \Phi_k(x) := \int_0^x \Phi_k(y) dy$ for $k=1,\dots,K$ and $G_0(x):=\int_0^x F_0(y) dy$. Assume that the functions $\overline \Phi_1,\dots,\overline \Phi_K$ and $G_0$ are strongly convex. Then, for each $t\in [0,T]$, there exists a unique solution of the system \eqref{fprice}-\eqref{eprice}. 
\end{proposition}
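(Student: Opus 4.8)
The plan is to observe that, for each fixed $t$, the system \eqref{fprice}--\eqref{eprice} is exactly the set of first-order (Karush--Kuhn--Tucker) optimality conditions of a strictly concave maximization problem over a box, so that existence and uniqueness of the prices reduce to existence and uniqueness of a maximizer. On $B:=[0,P^*]^2\times[0,\infty)^K$ (which, since prices are capped at $P^*$ and $\Phi_k(0)=0$, contains every solution of \eqref{fprice}--\eqref{eprice}) define
\begin{multline*}
g(P^p,P^{op},P^1,\dots,P^K):=\sum_{e\in\{p,op\}}c_e\Big((D^e_t-R_t)^+P^e-G_0(P^e)\\
-\sum_{i=1}^N\int_{\mathcal A\times\overline{\mathcal O}_i}\lambda_i(a)\,G_i\big(P^e-f_ie_{k(i)}P^C_t-f_iP^{k(i)}-x\big)\,m^i_t(da,dx)\Big)-\sum_{k=1}^K\overline\Phi_k(P^k).
\end{multline*}
Here $G_i$ is the primitive of the nondecreasing function $F_i$ on $[0,\infty)$ and vanishes on $(-\infty,0]$, hence is convex on $\mathbb R$; together with convexity of $G_0,\overline\Phi_k$ this makes $g$ concave on $B$. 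Using $G_0'=F_0$, $\overline\Phi_k'=\Phi_k$, $G_i'=F_i$ and the definitions of $F^k_t,\Psi^k_t$, one computes $\partial_{P^e}g=c_e\big((D^e_t-R_t)^+-F_0(P^e)-\sum_k F^k_t(P^e,P^k)\big)$ and $\partial_{P^k}g=c_p\Psi^k_t(P^p,P^k)+c_{op}\Psi^k_t(P^{op},P^k)-\Phi_k(P^k)$. Since $g$ is concave on the convex set $B$, a point of $B$ maximizes $g$ over $B$ if and only if it satisfies the KKT conditions; interior stationarity reproduces the equalities of \eqref{fprice} and the first line of \eqref{eprice}, the active upper constraint $P^e=P^*$ reproduces the loss-of-load clause of \eqref{eprice}, and the remaining boundaries ($P^e=0$, $P^k=0$) correspond to the degenerate cases $(D^e_t-R_t)^+=0$, resp.\ no consumption of fuel $k$, in which the relevant equation still holds with both sides zero.

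For existence it then remains to check that $g$ attains its supremum on $B$. It is continuous and finite (for instance $g=0$ at the origin, as $G_0(0)=\overline\Phi_k(0)=0$ and $G_i\equiv0$ on $(-\infty,0]$). For $P^e\in[0,P^*]$ the bracketed sums are bounded above by $\sum_e c_e(D^e_t-R_t)^+P^*$ uniformly in $(P^1,\dots,P^K)$ (using $0\le G_i\le G_i(P^*)$ and that $m^i_t$ is a finite measure with $\lambda_i\le1$), whereas $-\sum_k\overline\Phi_k(P^k)\to-\infty$ as $\max_k P^k\to\infty$ because strong convexity of each $\overline\Phi_k$ forces superlinear growth. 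Hence some compact sub-box $[0,P^*]^2\times[0,M]^K$ contains a maximizer of $g$ over $B$, which exists by continuity; this yields a solution of \eqref{fprice}--\eqref{eprice}.

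Uniqueness is where the strong convexity hypothesis is used essentially: we show $g$ is strongly concave, so its maximizer over the convex set $B$ is unique. Write $g=-\Theta+\ell+\Pi$, with $\ell:=\sum_e c_e(D^e_t-R_t)^+P^e$ linear, $\Pi:=-\sum_{e\in\{p,op\}}\sum_{i=1}^N c_e\int\lambda_i(a)G_i(P^e-f_ie_{k(i)}P^C_t-f_iP^{k(i)}-x)\,m^i_t(da,dx)$ concave (an integral of functions $-G_i$ of affine maps of the price vector), and $\Theta:=c_pG_0(P^p)+c_{op}G_0(P^{op})+\sum_k\overline\Phi_k(P^k)$. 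Since $G_0$ and each $\overline\Phi_k$ are strongly convex and $\Theta$ is a sum of functions of single coordinates, $\Theta$ is $\alpha$-strongly convex for some $\alpha>0$, i.e.\ $\Theta-\tfrac\alpha2\|\cdot\|^2$ is convex; therefore $g+\tfrac\alpha2\|\cdot\|^2=-(\Theta-\tfrac\alpha2\|\cdot\|^2)+\ell+\Pi$ is concave, so $g$ is $\alpha$-strongly concave and has a unique maximizer over $B$. Together with the existence step, \eqref{fprice}--\eqref{eprice} has exactly one solution.

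The only genuinely creative step, and the main obstacle, is finding the potential $g$: as in the computation leading to \eqref{gain.eq}, one must recognize that $G_i$ is the convex conjugate of the capacity-cost primitive, which is precisely what turns the market-clearing relations into stationarity conditions. Once $g$ is written down, concavity, coercivity, and the role of the strong-convexity hypothesis are routine; the remaining care is the bookkeeping of matching each boundary case of the KKT system for $g$ (in particular the price-cap alternative in \eqref{eprice}) with the corresponding clause of \eqref{fprice}--\eqref{eprice}.
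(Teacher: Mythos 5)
Your proof is correct and is essentially the paper's own argument: your potential $g$ is exactly the negative of the function $G_t$ that the paper minimizes over $[0,P^*]^2\times\mathbb R_+^K$, with existence and uniqueness following from strong convexity of $G_0$ and the $\overline\Phi_k$. You merely spell out the details (coercivity in the fuel-price directions, joint strong concavity, and the KKT matching of the boundary/price-cap cases) that the paper compresses into ``it is easy to see.''
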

\begin{proof}
Define the functions
$$
G^k_t(x,y):= \sum_{i:k(i)=k}\int_{\mathcal A\times  \overline{\mathcal O}_i} m^i_t(da,dz)
\lambda_i (a) G_i(x - f_i e_k P^C  - f_i y- z).
$$
for $k=1,\dots,K$, and
\begin{align*}
&G_t(P^p, P^{op}, P^1,\dots,P^K):= \sum_{k=1}^K \{c_p G^k_t(P^p,P^k)+c_{op}  G^k_t(P^{op},P^k)\}+c_p G_0(P^p) \\ & \qquad  + c_{op}G_0(P^{op})- c_p P^p (D^p_t-R_t)^+ - c_{op} P^{op} (D^{op}_t-R_t)^+ + \sum_{k=1}^K \overline \Phi_k(P^k) 
\end{align*}
It is easy to see that the solution the system \eqref{fprice}-\eqref{eprice} corresponds to the minimizer of $G_t$ on the set $[0,P^*]^2 \times \mathbb R^K_+$. Existence and uniqueness then follow from the assumptions of the proposition.  
\end{proof}
\subsection{Optimization functionals}
From \eqref{optconv}, the optimization functional for conventional agents using technology $i$ is given by
\begin{multline}
\int_{[0,T]\times \mathcal A \times \overline{\mathcal O}_i} m^i_t(da, dx) e^{-\rho
  t}\lambda_i(a)(c_p G_i(P^p_t - f_i e_{k(i)} P^C_t-f_i P^{k(i)}_t - x )\\ +c_{op} G_i(P^{op}_t - f_i e_{k(i)} P^C_t-f_i P^{k(i)}_t - x )-\kappa_i) dt \\ - K_i \int_{[0,T] \times \overline{\mathcal O}_i} \hat \mu^i(dt,  dx) e^{-(\rho+\gamma_i)
  t}
  +
  \widetilde K_i \int_{[0,T]\times \mathcal A \times \overline{\mathcal O}_i}  \mu^i(dt, da, dx) e^{-(\rho+\gamma_i) t},\label{optconv.mfg}
\end{multline}
for $i=1,\dots,N$,
while the optimization functional for renewable agents
using technology $i$
is given by
\begin{multline}
\int_{[0,T]\times \mathcal A \times \overline{\mathcal O}_i} m^i_t(da, dx) e^{-\rho
  t}\lambda_i(a)(
  (c_p P^p_t+c_{op}P^{op}_t) x - {\kappa_i}) dt \\- K_i \int_{[0,T] \times \overline{\mathcal O}_i} \hat \mu^i(dt, dx) e^{-(\rho+\gamma_i)
  t}
  +
  \widetilde K_i \int_{[0,T]\times \mathcal A \times \overline{\mathcal O}_i}  \mu^i(dt, da, dx) e^{-(\rho+\gamma_i) t},\label{optren.mfg}
\end{multline}
for $i=N+1,\dots,N+\overline N$. 

\subsection{Definition and properties of Nash equilibrium}
For $i=1,\dots,N+\overline N$, let $\hat \nu^{i}_0(dx)$ be the initial distribution of states of electricity generation projects for which the decision to build has not been taken yet, and let $\nu^{i}_0(da\times dx)$ be the initial distribution of ages and states of plants, which are operational or under construction, the age being counted from the build decision. We define by $\mathcal R_i(\hat\nu^i_0,\nu^i_0)$ the class of $4$-uplets 
$$
(\hat \mu^i, (\hat m^i_t)_{0\leq t\leq T},\mu^i, ( m^i_t)_{0\leq t\leq T})\in  \widehat{\mathcal M}_i\times \widehat{\mathcal V}_i\times \mathcal M_i \times \mathcal V_i,
$$
satisfying the following constraints: 
\begin{align}
\int_{[0,T]\times \mathcal A \times \overline{\mathcal  O}_i} u(t,a,x) \nu^i(dt,da,dx) &+ \int_{[0,T]\times \mathcal A \times \overline{\mathcal  O}_i} \left\{\frac{\partial u}{\partial t} + \mathcal L_i u\right\}m^i_t(da,dx)\, dt \notag\\&= \int_{[0,T]\times \mathcal A \times \overline{\mathcal  O}_i} u(t,a,x) \mu^i(dt,da,dx)\label{constnash1}\\
\int_{[0,T]\times \overline{\mathcal  O}_i} \hat u(t,x) \hat\nu^i(dt,dx) &+ \int_{[0,T]\times  \overline{\mathcal  O}_i} \left\{\frac{\partial \hat u}{\partial t} + \widehat{\mathcal L}_i \hat u\right\}\hat m^i_t(dx)\, dt \notag\\&= \int_{[0,T]\times \overline{\mathcal  O}_i} \hat u(t,x) \hat\mu^i(dt,dx)\label{constnash2}
\end{align}
for all $u\in C^{1,2,2}_b([0,T]\times \mathcal A \times \overline{\mathcal O}_i)$ and $\hat u\in C^{1,2}_b([0,T] \times \overline{\mathcal O}_i)$, where the entry measures are given by
\begin{align*}
&\hat\nu^i(dt,dx) = \delta_0(dt)\times  \hat \nu^{i}_0(dx)\, ,\\ &\nu^i(dt,da,dx) = \nu^{i}_0(da,dx)\, \delta_0(dt) + \hat \mu^i(dt,dx)\, \delta_0(da).
\end{align*}
%Moreover, for $i=N+1,\dots,N+\overline N$, 
%$\mu^i$ is supported on $\{T\}\times \mathcal A \times \overline{\mathcal O}_i$.
%Since the measurs $\hat \nu$ and $\nu$ are uniquely detemined by the initial conditions and constraints 
When this does not create confusion, we will sometimes drop the arguments and denote this class simply by $\mathcal R_i$. 
\begin{definition}
A relaxed mean-field Nash equilibrium is a sequence of $4$-uplets of measures/measure flows 
$$
(\hat \mu^i, (\hat m^i_t)_{0\leq t\leq T},\mu^i, ( m^i_t)_{0\leq t\leq T}) \in \mathcal R_i,\quad i=1,\dots,N+\overline N,
$$
and the price functions
$$
P^p,P^{op}, P^1,\dots, P^K : [0,T]\to [0,P^*]^2\times \mathbb R^K_+,
$$
such that 
\begin{itemize}
    \item[i.] For each $i=1,\dots,N$, the measures $(\hat \mu^i, (\hat m^i_t)_{0\leq t\leq T},\mu^i, ( m^i_t)_{0\leq t\leq T})$ maximize the functional \eqref{optconv.mfg} over all elements of $\mathcal R_i$; 
    \item[ii.] For each $i=N+1,\dots,N+\overline N$, the measures $(\hat \mu^i, (\hat m^i_t)_{0\leq t\leq T}, \mu^i, ( m^i_t)_{0\leq t\leq T})$ maximize the functional \eqref{optren.mfg} over all elements of $\mathcal R_i$; 
    \item[iii.] For each $t$, the price vector $(P^p_t,P^{op}_t, P^1_t,\dots, P^K_t)$ is uniquely given by Proposition \ref{price.prop}. 
\end{itemize}
\end{definition}

\begin{theorem}\label{main.thm}
Assume that the initial measures satisfy
\begin{align}
\int_{{\mathcal O}_i} \ln(1+|x|)\hat \nu^i_0(dx) + \int_{\mathcal A\times {\mathcal O}_i} \ln(1+|x|)\nu^i_0(da,dx)<\infty. \label{integr.eq}
\end{align}
for $i=1,\dots,N+\overline N$. 
Assume that the peak demand $D^p$, the off-peak demand $D^{op}$ and the carbon price $P^C$ have finite variation on $[0,T]$, and that the conditions of Proposition \ref{price.prop} are satisfied. Then there exists a relaxed mean-field Nash equilibrium.
\end{theorem}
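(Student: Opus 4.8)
\emph{Overall strategy.} Because the agents interact only through the price trajectories, the plan is to prove existence by a Kakutani--Fan--Glicksberg fixed-point argument. Let $\mathcal K_P$ be the set of price functions $(P^p,P^{op},P^1,\dots,P^K):[0,T]\to[0,P^*]^2\times\prod_k[0,\overline P^k]$ of total variation at most $V$, the bounds $\overline P^k$ and $V$ being supplied by the a priori estimates described below; by Helly's selection theorem $\mathcal K_P$ is convex and compact in $L^1([0,T];\mathbb R^{K+2})$. Note that each constraint set $\mathcal R_i=\mathcal R_i(\hat\nu^i_0,\nu^i_0)$ is convex and, crucially, does \emph{not} depend on the prices. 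On $\mathcal K:=\mathcal K_P\times\prod_{i=1}^{N+\overline N}\mathcal R_i$ I would define the set-valued map $\Phi$ sending $\big(P,((\hat\mu^i,\hat m^i_\cdot,\mu^i,m^i_\cdot))_i\big)$ to the set of all $\big(\widetilde P,((\widetilde{\hat\mu}^i,\widetilde{\hat m}^i_\cdot,\widetilde\mu^i,\widetilde m^i_\cdot))_i\big)$ such that $\widetilde P$ is the price trajectory produced by Proposition \ref{price.prop} from the occupation flows $(m^i_\cdot)_i$, and, for each $i$, the $i$-th $4$-uplet maximises \eqref{optconv.mfg} (resp.\ \eqref{optren.mfg}) over $\mathcal R_i$ at the price $P$. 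A fixed point of $\Phi$ is, by construction, a relaxed mean-field Nash equilibrium, so the work reduces to checking the hypotheses of Kakutani--Fan--Glicksberg: $\mathcal K$ convex compact, $\Phi$ with nonempty convex compact values, and $\Phi$ with closed graph.

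\emph{A priori estimates.} Testing the state constraints \eqref{constnash1}--\eqref{constnash2} with $u\equiv1$, with $u(t,a,x)=t$, and with $u(t,a,x)=\phi(t)\psi(a,x)$ yields, uniformly over $\mathcal R_i$: the total masses of $\hat\mu^i,\mu^i$ and the time-integrated masses of $m^i_\cdot,\hat m^i_\cdot$ are controlled by the (finite) initial data; and the maps $t\mapsto\langle\psi,m^i_t\rangle$ have bounded variation with a bound depending only on $\|\psi\|_{C^{1,2}_b}$ and the initial data, so in particular $\sup_t m^i_t(\mathcal A\times\overline{\mathcal O}_i)<\infty$ uniformly. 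Electricity prices lie in $[0,P^*]$ by definition; since $\Phi_k$ is increasing and, by the mass bounds, the left side of \eqref{fprice} is uniformly bounded, the fuel prices of Proposition \ref{price.prop} are bounded by some $\overline P^k$. Tightness of the measures in $\mathcal R_i$ in the (possibly unbounded) state direction is obtained by testing \eqref{constnash1}--\eqref{constnash2} against smooth bounded truncations of $\varphi(x)=\ln(1+|x|)$, for which $\widehat{\mathcal L}_i\varphi$ is bounded above for both generators appearing in the model, together with the integrability assumption \eqref{integr.eq}. Finally, strong convexity of $\overline\Phi_k$ and $G_0$ makes the Proposition \ref{price.prop} minimiser a locally Lipschitz function of its data; combined with the finite variation of $D^p,D^{op},P^C$ and the just-established bounded variation of the occupation flows, this gives a uniform bound $V$ on $\mathrm{Var}(\widetilde P)$, so $\Phi$ maps $\mathcal K$ into $\mathcal K$. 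The mass and tightness bounds, together with closedness under narrow limits (the test functions in \eqref{constnash1}--\eqref{constnash2} being in $C^{1,1,2}_b$), make each $\mathcal R_i$ sequentially compact, hence $\mathcal K$ is compact.

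\emph{Values of $\Phi$.} For fixed $(P,(m^i_\cdot))\in\mathcal K$, the bounds above keep the argument of each $G_i$ in \eqref{optconv.mfg} bounded above — so $G_i$ is evaluated on a compact set on which it is bounded and Lipschitz — and the renewable state variable bounded, so the functionals \eqref{optconv.mfg}--\eqref{optren.mfg} are affine in the $4$-uplet and continuous along the converging sequences furnished by the compactness above. Since $\mathcal R_i$ is convex and compact, the set of maximisers is nonempty (Weierstrass), convex, and compact; the component $\widetilde P$ is single-valued. Hence $\Phi(P,(m^i_\cdot))$ is a nonempty convex compact subset of $\mathcal K$.

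\emph{Closed graph --- the main obstacle.} Let $(P^n,(m^{i,n}_\cdot))\to(P,(m^i_\cdot))$ in $\mathcal K$ with $(\widetilde P^n,(\widetilde m^{i,n}_\cdot))\in\Phi(P^n,(m^{i,n}_\cdot))$ converging to $(\widetilde P,(\widetilde m^i_\cdot))$. Along a subsequence, $P^n\to P$ a.e.\ in $t$, the occupation flows converge narrowly for a.e.\ $t$ (using the uniform bounded-variation-in-$t$ estimate and a Helly/diagonal argument over a countable set of test functions), and the entry/exit measures converge narrowly; the limits lie in $\mathcal R_i$. Two points require genuine work. First, that $\widetilde P^n$ converges, for a.e.\ $t$, to the Proposition \ref{price.prop} price built from $(m^i_\cdot)$: this follows from the a.e.-in-$t$ narrow convergence of $m^{i,n}_t$, the continuity in the measure of the aggregate supply/consumption functionals, and the stability of the strongly convex minimisation problem --- but it rests on the fact that the linear constraint \eqref{constnash1} forces $t\mapsto m^i_t$ to be suitably regular in $t$ (parabolic, interior-nondegenerate part $\widehat{\mathcal L}_i$ plus pure transport in the age variable), so that narrow convergence of the flows as measures can be upgraded to convergence of the time-disintegrations. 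Second, that the limiting $4$-uplets are optimal for the limiting price $P$: here one uses that $G_i$ is Lipschitz on the relevant compact and that $\sup_t m^i_t(\cdot)<\infty$, so the functionals \eqref{optconv.mfg}--\eqref{optren.mfg} depend continuously on $(m^i_\cdot,P)$ jointly (fixed-$t$ narrow convergence in the measure, $L^1$ convergence in the price, dominated convergence in $t$), and concludes by inserting the limit $4$-uplets as competitors in the pre-limit optimisation problems. Kakutani--Fan--Glicksberg then provides a fixed point of $\Phi$, which is the desired equilibrium. I expect the delicate parts to be precisely the $t$-regularity of the occupation flows needed to make the price map continuous, and the control of mass in the unbounded state variable via \eqref{integr.eq} and the Lyapunov estimate; the remaining verifications are careful but routine.
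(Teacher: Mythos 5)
Your strategy is essentially the one the paper uses: a Kakutani--Fan--Glicksberg argument on a compact convex product of the constraint sets, with the closed-graph property as the main burden, compactness of each $\mathcal R_i$ obtained from the linear constraints tested against truncations of a logarithmic Lyapunov function (this is exactly how the paper exploits \eqref{integr.eq}), and the continuity of the price map obtained from uniform bounded-variation estimates plus Helly selection. The only structural difference is cosmetic: you adjoin the price trajectory to the fixed-point variable (a compact convex set of BV functions in $L^1$), whereas the paper keeps the fixed point purely on the tuples $\boldsymbol m=(\hat\mu^i,\hat m^i,\mu^i,m^i)_i$ and composes the price map $\boldsymbol m\mapsto P(\boldsymbol m)$ into the payoff functionals $\Psi^i(\cdot,\boldsymbol m)$; both formulations characterize the same equilibria and require the same continuity lemmas.

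The one place where your write-up materially underestimates the work --- and which you correctly flag as the delicate part --- is the assertion that strong convexity makes the Proposition~\ref{price.prop} minimiser ``a locally Lipschitz function of its data,'' from which you deduce both the uniform BV bound on $\widetilde P$ and the a.e.-$t$ stability of the price under narrow convergence of the flows. The data here is infinite-dimensional: the objective $G_t$ depends on $m^i_t$ through the whole family of supply functionals $c\mapsto\int m^i_t(da,dx)\lambda_i(a)F_i(c-x)$, and Lemma~\ref{BV} only controls the $t$-variation of $\langle h,m^i_t\rangle$ for each \emph{fixed} smooth $h$, with a constant depending on $\|h\|_{C^{1,1,2}}$. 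Since the minimiser moves with $t$, a naive Lipschitz-in-the-data bound does not close: one needs a bound on the variation of the price that is summable over the family of supply-function evaluations actually entering the first-order conditions. The paper resolves this in Lemma~\ref{mapping.lm} by discretizing the offer functions into finitely many BV functionals $F^{i,n}_q(t)$, mollifying, and proving that the implicitly defined price mappings $\Theta^\star_n$ satisfy $\sum_{i,q}|\partial\Theta^\star_n/\partial\xi^i_q|\le C$ \emph{uniformly in $n$} (only finitely many of these partial derivatives are nonzero for each $i$, thanks to the local support of the interpolation); this uniform derivative-sum bound is what converts the per-test-function BV estimate of Lemma~\ref{BV} into the BV bound and $L^1$-stability of the prices in Lemma~\ref{conv}. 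Your sketch would need this (or an equivalent quantitative stability estimate for the parametrized strongly convex programs) to be completed; the rest of your argument --- mass and tightness estimates, nonempty convex compact values, identification of limits via weak convergence and dominated convergence, and the Lipschitz property of $G_i$ to pass to the limit in the conventional producers' payoffs --- matches the paper's proof.
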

\begin{proposition}\label{unique.prop}
 Let $(P^p_t,P^{op}_t, P^1_t,\dots, P^K_t)$ and $(\overline P^p_t,\overline P^{op}_t, \overline P^1_t,\dots, \overline P^K_t)$ be price vectors corresponding to two Nash equilibria. Then, outside a set of measure zero, 
    $$
    P^p_t = \overline P^p_t,\ P^{op}_t = \overline P^{op}_t,\ P^1_t = \overline P^1_t,\dots, P^K_t = \overline P^K_t.
    $$
\end{proposition}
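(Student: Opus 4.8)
The plan is to run a Lasry--Lions-style monotonicity argument, exploiting that the same functions enter both the agents' objectives and the market-clearing potential of Proposition~\ref{price.prop}. Let the two relaxed Nash equilibria be built on the same initial data $\hat\nu_0^i,\nu_0^i$ (so the admissible sets $\mathcal R_i$ coincide), with price trajectories $P$ and $\bar P$ and occupation-measure flows $(m^i_t)_i$ and $(\bar m^i_t)_i$; write $\Delta m^i_t:=m^i_t-\bar m^i_t$. For a price vector $q=(q^p,q^{op},q^1,\dots,q^K)$, let $g_i(t,a,x;q)$ be the price-dependent part of the instantaneous gain per unit of occupation measure appearing in \eqref{optconv.mfg}--\eqref{optren.mfg}: $g_i(t,a,x;q)=e^{-\rho t}\lambda_i(a)\bigl(c_pG_i(q^p-f_ie_{k(i)}P^C_t-f_iq^{k(i)}-x)+c_{op}G_i(q^{op}-f_ie_{k(i)}P^C_t-f_iq^{k(i)}-x)\bigr)$ for $i\le N$, and $g_i(t,a,x;q)=e^{-\rho t}\lambda_i(a)(c_pq^p+c_{op}q^{op})x$ for $i>N$. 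Two observations drive the proof: (a) in \eqref{optconv.mfg}--\eqref{optren.mfg} only the occupation-measure term depends on the prices; (b) the part of the clearing potential $G_t$ of Proposition~\ref{price.prop} depending on the measures equals $e^{\rho t}\sum_{i\le N}\int m^i_t\,g_i(t,\cdot\,;q)$ (the conventional-supply term, linear in $(m^i)_{i\le N}$) plus $-c_pq^p(D^p_t-R_t)^+-c_{op}q^{op}(D^{op}_t-R_t)^+$, which depends on $(m^i)_{i>N}$ only through $R_t=R_t[m]$.

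\emph{Agent side.} For each $i$, the $i$-th component of the first equilibrium is admissible and optimal at prices $P$, and the $i$-th component of the second is admissible and optimal at prices $\bar P$; summing the two optimality inequalities and discarding, by (a), the price-independent terms yields $\sum_{i}\int_0^T\int\Delta m^i_t\,(g_i(\cdot;P_t)-g_i(\cdot;\bar P_t))\,dt\ge0$. Separating the renewable indices and using $\sum_{i>N}\int\Delta m^i_t(da,dx)\lambda_i(a)\,x=R_t[m]-R_t[\bar m]$, this rearranges into
\[
-\sum_{i=1}^{N}\int_0^T\!\int\Delta m^i_t\bigl(g_i(\cdot;P_t)-g_i(\cdot;\bar P_t)\bigr)\,dt\ \le\ \int_0^T e^{-\rho t}\bigl(c_p(P^p_t-\bar P^p_t)+c_{op}(P^{op}_t-\bar P^{op}_t)\bigr)\bigl(R_t[m]-R_t[\bar m]\bigr)\,dt.
\]

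\emph{Price side.} Under the hypotheses of Proposition~\ref{price.prop}, $G_t^{[m_t]}$ is $C^1$ and $c_0$-strongly convex on $[0,P^*]^2\times\mathbb R^K_+$ with $c_0>0$ independent of $t$ and of the measures (strong convexity coming from $c_pG_0(P^p)+c_{op}G_0(P^{op})$ and $\sum_k\overline\Phi_k(P^k)$, which between them control all of $P^p,P^{op},P^1,\dots,P^K$, everything else being convex). Since $P_t$ and $\bar P_t$ minimize $G_t^{[m_t]}$ and $G_t^{[\bar m_t]}$ over this convex set, first-order optimality plus strong convexity give $G_t^{[m_t]}(\bar P_t)-G_t^{[m_t]}(P_t)\ge\frac{c_0}{2}|P_t-\bar P_t|^2$ and the symmetric inequality. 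Adding them, expanding the $m$-dependence of $G_t$ via (b), multiplying by $e^{-\rho t}\ge e^{-\rho T}$ and integrating in $t$, one obtains
\[
e^{-\rho T}c_0\int_0^T|P_t-\bar P_t|^2\,dt\ \le\ -\sum_{i=1}^{N}\int_0^T\!\int\Delta m^i_t\bigl(g_i(\cdot;P_t)-g_i(\cdot;\bar P_t)\bigr)\,dt\ +\ \mathcal I_{\mathrm{dem}},
\]
where $\mathcal I_{\mathrm{dem}}:=\int_0^T e^{-\rho t}\sum_{E\in\{p,op\}}c_E(P^E_t-\bar P^E_t)\bigl[(D^E_t-R_t[m])^+-(D^E_t-R_t[\bar m])^+\bigr]\,dt$; note that the first term on the right-hand side is precisely the quantity that the agent-side inequality bounds from above.

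\emph{Conclusion and main obstacle.} Substituting the agent-side bound and then merging each $R_t$-pair via the elementary identity $r+(d-r)^+=\max(r,d)$ gives
\[
e^{-\rho T}c_0\int_0^T|P_t-\bar P_t|^2\,dt\ \le\ \int_0^T e^{-\rho t}\sum_{E\in\{p,op\}}c_E(P^E_t-\bar P^E_t)\bigl(\max(R_t[m],D^E_t)-\max(R_t[\bar m],D^E_t)\bigr)\,dt,
\]
and it remains to show that for a.e.\ $t$ each summand of the integrand is $\le0$. Fix $t$ and take $E=p$. If $R_t[m]\le D^p_t$ and $R_t[\bar m]\le D^p_t$, both maxima equal $D^p_t$ and the summand is $0$. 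If $R_t[m]>D^p_t$ then the right-hand side of \eqref{eprice} equals $(D^p_t-R_t[m])^+=0$, and since $F_0\ge0$, $F^k_t\ge0$ and $F_0$ is strictly increasing with $F_0(0)=0$ (strong convexity of $G_0$), necessarily $P^p_t=0$; likewise $R_t[\bar m]>D^p_t$ forces $\bar P^p_t=0$. Going through the four sign cases --- both below $D^p_t$ (summand $0$); both above ($P^p_t=\bar P^p_t=0$, summand $0$); exactly one above, say $R_t[m]>D^p_t\ge R_t[\bar m]$ (then $P^p_t=0$, the max-difference is $R_t[m]-D^p_t>0$ and $P^p_t-\bar P^p_t=-\bar P^p_t\le0$, so the product is $\le0$), and symmetrically --- shows the summand is $\le0$ in every case. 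Hence $\int_0^T|P_t-\bar P_t|^2\,dt\le0$, i.e.\ $P^p_t=\bar P^p_t$, $P^{op}_t=\bar P^{op}_t$ and $P^k_t=\bar P^k_t$ ($k=1,\dots,K$) outside a Lebesgue-null set. I expect the delicate part to be the bookkeeping that makes the agent-side and price-side inequalities fit together: the price-independent terms must cancel and the functions $g_i$ (which are at once the agents' gains and, up to the discount factor, the conventional-supply part of the clearing potential) must cancel between the two inequalities, so that only the renewable/residual-demand cross-terms survive --- and these are not sign-definite in general, becoming so only through the price-cap/overproduction structure of \eqref{eprice} combined with the $\max$-identity above.
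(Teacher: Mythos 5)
Your proof is correct and follows the same Lasry--Lions monotonicity strategy as the paper: sum each agent's two optimality inequalities across the two equilibria, cancel the price-independent ($\mu$, $\hat\mu$, $\kappa_i$) terms, and play the resulting sign against the convexity of the market-clearing potential $G_t$ of Proposition~\ref{price.prop}, whose minimizers are the equilibrium prices. The one place where you genuinely go beyond the paper is the treatment of the residual-demand terms. The paper asserts that summing the agent-side inequalities over all types yields directly $\int_0^T\{G_t(P_t)-G_t(\overline P_t)+\overline G_t(\overline P_t)-\overline G_t(P_t)\}\,dt\geq 0$; that identification is exact only when $(D^E_t-R_t)^+=D^E_t-R_t$, and in general leaves over exactly your term $\mathcal I_{\mathrm{dem}}$ (equivalently, the differences of $\max(R_t,D^E_t)$). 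You dispose of it by noting that overproduction $R_t>D^E_t$ forces the corresponding electricity price to vanish via \eqref{eprice} (because $F_0$ is strictly increasing with $F_0(0)=0$ under strong convexity of $G_0$, and the conventional supply terms are nonnegative), so that each summand has the favourable sign; this is precisely the justification the paper's proof skips, and your four-case argument is sound. The remaining differences are cosmetic: you use strong convexity to obtain a quantitative bound on $\int_0^T|P_t-\overline P_t|^2\,dt$ where the paper invokes strict convexity together with an almost-everywhere argument, and you carry the discount factor $e^{-\rho t}$ explicitly, which the paper silently drops and which is harmless since it is bounded above and below on $[0,T]$.
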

\section{Numerical resolution and illustrations}
\label{numerics.sec}
We implement a linear programming fictitious play algorithm (see \cite{aid2020entry} and \cite{dumitrescu2022linear}), which consists in the following steps:
\begin{itemize}
\item Choose initial measures and measure flows 
$$
(\hat \mu^{i,0}, (\hat m^{i,0}_t)_{0\leq t\leq T}, \mu^{i,0}, ( m^{i,0}_t)_{0\leq t\leq T})\in \mathcal R_i,\quad i=1,\dots,N+\overline N.
$$ 
\item For $j=1,\dots,N_{iter}$, repeat:
\begin{itemize}
    \item Compute the prices $(P^p_t,P^{op}_t, P^1_t,\dots, P^K_t)_{0\leq t\leq T}$, using the measures and measure flows of iteration $j-1$; 
    \item Compute the best responses 
    $$
    (\hat{\bar\mu}^{i,j}, (\hat{\bar m}^{i,j}_t)_{0\leq t\leq T}, \bar\mu^{i,j}, ( \bar m^{i,j}_t)_{0\leq t\leq T})
    $$
    by minimizing the functionals \eqref{optconv.mfg} and \eqref{optren.mfg} over the set $\mathcal R_i$. 
    \item Update the measures:
\begin{align*}
&(\hat{\mu}^{i,j}, (\hat{ m}^{i,j}_t)_{0\leq t\leq T}, \mu^{i,j}, ( m^{i,j}_t)_{0\leq t\leq T})\\ &= \varepsilon_j (\hat{\bar\mu}^{i,j}, (\hat{\bar m}^{i,j}_t)_{0\leq t\leq T}, \bar\mu^{i,j}, ( \bar m^{i,j}_t)_{0\leq t\leq T})\\ &+ (1-\varepsilon_j) (\hat{\mu}^{i,j-1}, (\hat{ m}^{i,j-1}_t)_{0\leq t\leq T}, \mu^{i,j-1}, (  m^{i,j-1}_t)_{0\leq t\leq T})
\end{align*}
\end{itemize}
\end{itemize}
The best responses are computed by solving a linear programming problem, after discretizing the measures/measure flows, the constraints and the optimization objectives. 
%First of all, to simplify the problem, we may eliminate  the age variable for plants, for which the decision to build has not been taken yet. 
%$\mathcal R_i$ may then be redefined as the class of $4$-uplets 
%$$
%(\hat \mu^i, (\hat m^i_t)_{0\leq t\leq T},\mu^i, ( m^i_t)_{0\leq t\leq T})\in  \mathcal M^0_i\times \mathcal V^0_i\times  \mathcal M_i\times \mathcal V_i,
%$$
%where $\mathcal M^T_i$ is the class of positive finite measures on $\mathcal A\times \overline{\mathcal O}_i$, 
%satisfying the following constraints: 
%\begin{align*}
%\int_{ \overline{\mathcal  O}_i} v(0,x) \hat\nu^i_0(dx) + \int_{[0,T] \times \overline{\mathcal  O}_i} \left\{\frac{\partial v}{\partial t} + \mathcal L^0_i v\right\}\hat m^i_t(dx)\, dt = \int_{[0,T]\times \overline{\mathcal  O}_i} v(t,x) \hat\mu^i(dt,dx)\notag
%\end{align*}
% for all $i=1,\dots,N+\overline N$ and $v\in C^{1,2}_b([0,T]\times  \overline{\mathcal O}_i)$ and
%\begin{align*}
%&\int_{ \mathcal A \times \overline{\mathcal  O}_i} u(0,a,x) \nu^i_0(da,dx)+\int_{[0,T] \times \overline{\mathcal  O}_i} u(t,0,x) \hat \mu^i(dt,dx) \\&\qquad + \int_{[0,T]\times \mathcal A \times \overline{\mathcal  O}_i} \left\{\frac{\partial u}{\partial t} + \mathcal L_i u\right\}m^i_t(da,dx)\, dt = \int_{[0,T]\times \mathcal A \times \overline{\mathcal  O}_i} u(t,a,x) \mu^i(dt,da,dx)
%\end{align*}
%for all $i=1,\dots,N+\overline N$ and $u\in C^{1,2,2}_b([0,T]\times \mathcal A \times \overline{\mathcal O}_i)$.

Assuming that $m$ and $\hat m$ have densities, and that $\mu$ and $\hat \mu$ have densities on $[0,T)$, by formal integration by parts, we get: 
\begin{align*}
&\int_{ \overline{\mathcal  O}_i} \hat u(0,x) (\hat\nu^i_0(dx)-\hat m_0^i(dx)) + \int_{[0,T] \times \overline{\mathcal  O}_i} \hat u(t,x)\left\{-\frac{\partial  \hat m^i_t}{\partial t} + \mathcal L^{0*}_i \hat m^i_t - \hat \mu^i(t,x)\right\}\, dt\, dx  \\ &+\int_{ \overline{\mathcal  O}_i} \hat u(T,x) (\hat m_T^i(dx)-\hat\mu^i(\{T\},dx)) = 0\notag
\end{align*}
 for all $i=1,\dots,N+\overline N$ and $v\in C^{1,2}_b([0,T]\times  \overline{\mathcal O}_i)$ and 
\begin{align*}
&\int_{ \mathcal A \times \overline{\mathcal  O}_i} u(0,a,x) (\nu^i_0(da,dx)-m^i_0(da,dx))+\int_{[0,T] \times \overline{\mathcal  O}_i} u(t,0,x) (\hat \mu^i(dt,dx) -m^i_t(dx) \, dt)\\&\qquad + \int_{[0,T]\times \mathcal A \times \overline{\mathcal  O}_i} u(t,a,x)\left\{-\frac{\partial m^i_t}{\partial t} + \mathcal L^*_i m^i_t - \mu^i(t,a,x)\right\}\,dt\,da\,dx \\&\qquad + \int_{ \mathcal A \times \overline{\mathcal  O}_i} u(T,a,x) (m^i_T(da,dx)-\mu^i(\{T\},da,dx)=0
\end{align*}
for all $i=1,\dots,N+\overline N$ and $u\in C^{1,2,2}_b([0,T]\times \mathcal A \times \overline{\mathcal O}_i)$. 
We conclude that $\hat m^i$ and $m^i$ satisfy the following Fokker-Planck equations in the sense of distributions:
\begin{align*}
&-\frac{\partial  \hat m^i_t}{\partial t} + \mathcal L^{0*}_i \hat m^i_t - \hat \mu^i(t,x) = 0,\quad \text{on $(t,x)\in [0,T)\times {\mathcal O}_i$},\quad \hat m^i_0 = \hat \nu^i_0\\
&-\frac{\partial   m^i_t}{\partial t} + \mathcal L^{*}_i  m^i_t -  \mu^i(t,a,x) = 0,\quad \text{on $(t,a,x)\in [0,T)\times \mathcal A\times {\mathcal O}_i$},\quad  m^i_0 =  \nu^i_0,%\\
%&{\color{black}-\frac{\partial   \tilde m^i_t}{\partial t} + \mathcal L^{0*}_i  \tilde m^i_t -  \tilde\mu^i(t,x) + m^i_t(a_{max},x) = 0,\quad \text{on $(t,x)\in [0,T)\times {\mathcal O}_i$},\quad  \tilde m^i_0 =  0,}
\end{align*}
with $m^i_t(0,x) = \hat \mu^i(t,x)$, for $i=1,\dots,N+\overline N$.

If the plant age is not a relevant variable for the problem, the latter two equation takes a slightly different form:
\begin{align*}
-\frac{\partial   m^i_t}{\partial t} + \mathcal L^{0*}_i  m^i_t +\hat \mu^i(t,x)-  \mu^i(t,x) = 0,\quad \text{on $(t,x)\in [0,T)\times {\mathcal O}_i$},\quad  m^i_0 =  \nu^i_0,
\end{align*}
for $i=1,\dots,N+\overline N$.

For the computation of the best response, these Fokker-Planck equations are discretized using the implicit scheme, as described in \cite{aid2020entry}, and the optimization functionals \eqref{optconv.mfg} and \eqref{optren.mfg} are computed from the discretized measures in a natural way. Our numerical implementation of the best response computation uses the Gurobi optimization library (\texttt{www.gurobi.com}). 

\paragraph{Numerical illustration} {\color{black}For the numerical illustration we consider a model with nonzero construction time, but infinite lifetime. Given that we perform the simulation over 25 years {\color{black}(starting from 2018)}, this is not a very important limitation. We assume a 2-year build time for renewable plants and 4-year build time for gas plants.\footnote{These numbers correspond to the average 2018 values, see {\texttt{iea.org/data-and-statistics/charts/average-\\power-generation-construction-time-capacity-weighted-2010-2018}}}. }

We consider an electricity market with three types of agents: coal-fired generators, gas-fired generators and renewable (wind) generators. The carbon tax grows linearly from 30 to 60 euros per ton of CO2 for the first 10 years and then from 60 to 200 euros per ton of CO2 during the last 15 years. The code for the implementation is available at \texttt{https://github.com/petertankov/mfg\_elec}; the values of all model parameters may be found in this repository. 

Figures \ref{demand.fig}, \ref{price.fig} and \ref{supply.fig} illustrate the complex dynamics of the electricity industry described by our model. At first, the renewable technology is not yet mature, but the carbon tax is already too high for the coal producers, so they are replaced by gas-fired generators, until about 2025 (see Figure \ref{price.fig}, right graph). Due to higher demand for gas, the price of gas increases, while the price of coal decreases. Then, renewable technology matures and at the same time becomes more competitive because of higher carbon tax. The remaining coal producers and part of the gas producers then leave the market and are replaced with renewable generators, although some gas producers remain in the market until the end to meet the peak electricity demand. The price of electricity in the long term seems to be more influenced by the carbon price than by the gas price. {\color{black}Finally, Figure \ref{timetobuild.fig} illustrates the impact of plant construction time on the installed capacity of different technologies and the electricity price, by comparing the main simulation results with the ones obtained with zero construction time. Nonzero construction time delays the energy transition: in the short term, coal-fired power plant closures are delayed to compensate for later arrival of gas-fired plants, and in the meduim-term, more gas-fired plants are built to compensate for longer construction time of renewables. The impact on electricity prices is less prominent: in the short term, late arrival of gas-fired plants leads to higher peak prices, and in the medium term, off-peak prices are higher due to lower renewable penetration.} 

\begin{figure}
\centerline{\includegraphics[width=\textwidth]{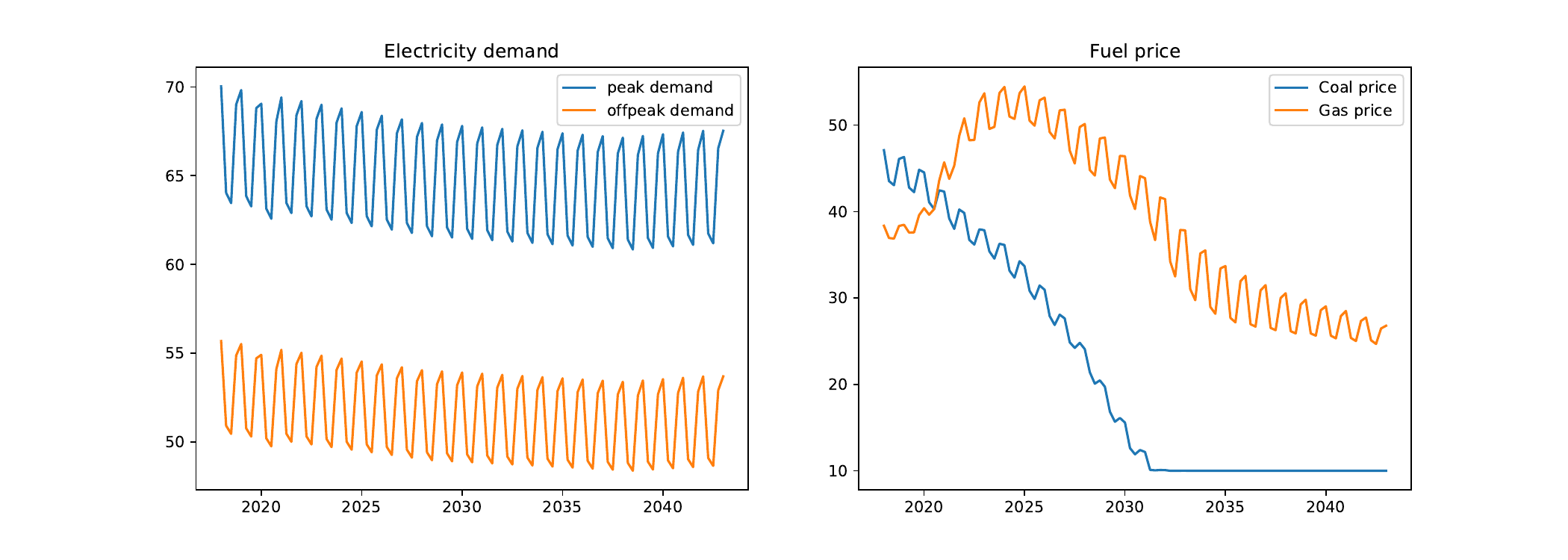}}
\caption{Left: exogenous peak and off-peak electricity demand. Right: gas and coal price trajectories, computed by the model.}
\label{demand.fig}
\end{figure}

\begin{figure}
\centerline{\includegraphics[width=\textwidth]{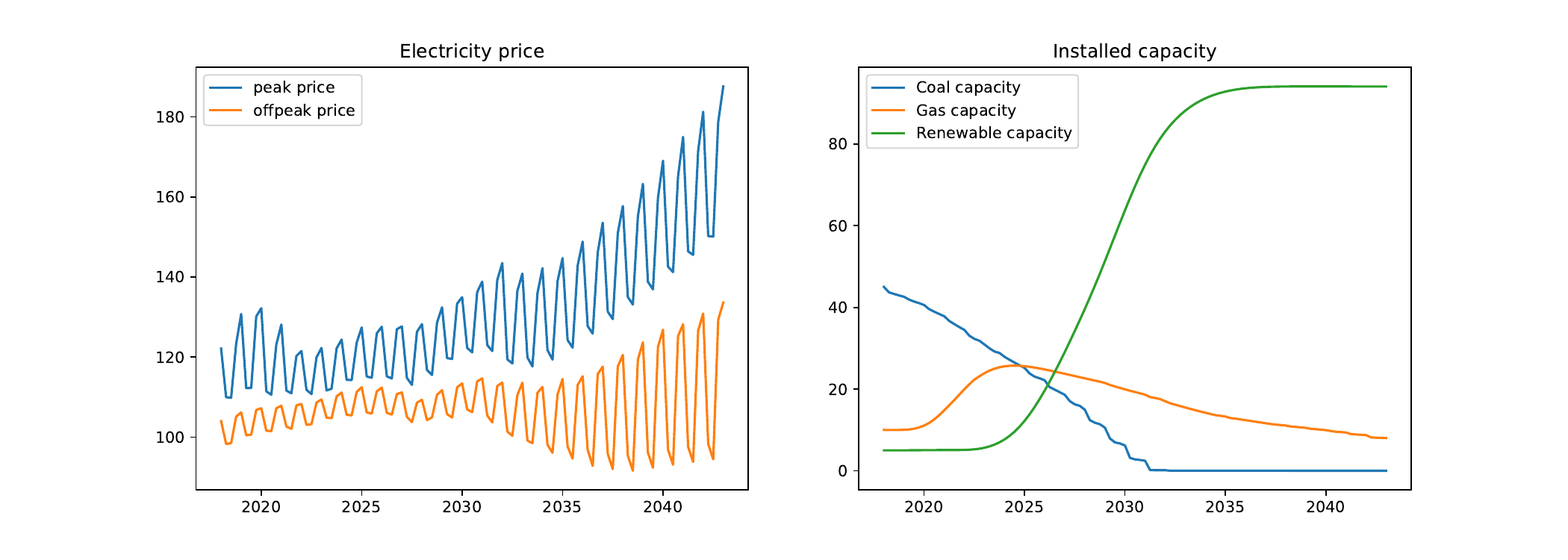}}
\caption{Left: peak and off-peak electricity price (euros per MWh). Right: installed capacity for the three technologies (GW).}
\label{price.fig}
\end{figure}

\begin{figure}
\centerline{\includegraphics[width=\textwidth]{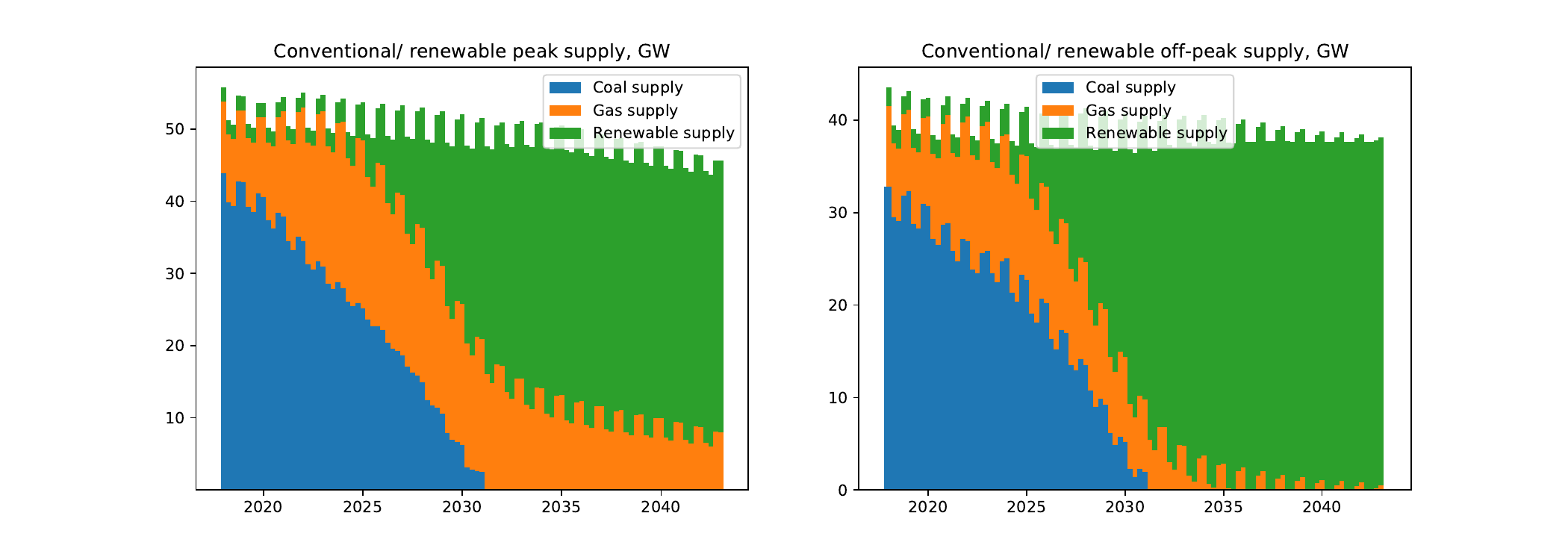}}
\caption{Peak (left) and off-peak electricity supply (GW).}
\label{supply.fig}
\end{figure}

\begin{figure}
\centerline{\includegraphics[width=\textwidth]{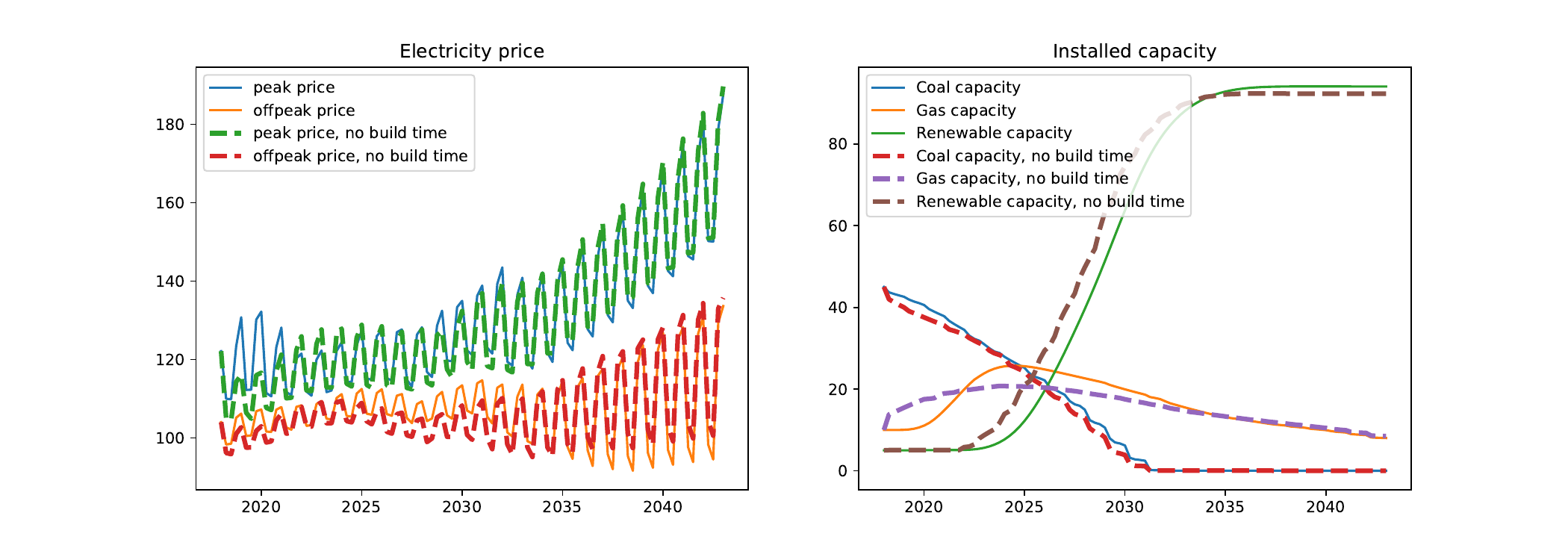}}
\caption{ Comparison of the model with zero and nonzero construction time. Left: peak and off-peak electricity price (euros per MWh). Right: installed capacity for the three technologies (GW).}
\label{timetobuild.fig}
\end{figure}
\bibliographystyle{chicago}
\bibliography{ms}

\appendix

\section{Proof of the main results}
We follow the structure of proofs of \cite{aid2020entry} with some important changes, owing to the facts that agents can both invest and divest and that the fuel prices are endogenously determined. 
\subsection{Preliminary lemmas}
All results in this section are shown under the assumptions of Theorem \ref{main.thm}. 
The first lemma establishes the compactness of the set $\mathcal R_i$. To this end, we associate the measure flow $(m^i_t)_{0\leq t\leq T}$ with a finite positive measure on $[0,T]\times \mathcal A\times \overline O_i$ defined by $m^i_t\, dt$, and we endow the set $\mathcal V_i$ with the topology of weak convergence of the associated measures. Similarly, the measure flow $(\hat m^i_t)_{0\leq t\leq T}$ is associated with a finite positive measure on $[0,T]\times \overline O_i$ defined by $\hat m^i_t\, dt$, and we endow the set $\widehat{\mathcal V}_i$ with the topology of weak convergence of the associated measures. The sets $\mathcal M_i$ and $\widehat{\mathcal M}_i$ are endowed with the standard weak convergence topology, and the set $\mathcal R_i(\hat \nu^i_0,\nu^i_0)$ is endowed with the product topology. 
\begin{lemma}
Fix $i\in\{1,\dots,N+\overline N\}$ and assume that the initial measures $\hat \nu^i_0$ and $\nu^i_0$ satisfy the assumptions \eqref{integr.eq}. 
Then, the set $\mathcal R_i(\hat \nu^i_0,\nu^i_0)$ is compact. 
\end{lemma}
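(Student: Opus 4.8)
The plan is to establish that $\mathcal R_i(\hat\nu^i_0,\nu^i_0)$ is sequentially closed and tight; since on bounded‑mass subsets of the finite positive measures over the Polish space $[0,T]\times\mathcal A\times\overline{\mathcal O}_i$ the weak topology is metrizable, tightness together with closedness gives compactness. There are three ingredients: uniform mass bounds, tightness in the possibly unbounded state variable $x$, and stability of the linear constraints under weak limits.

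\emph{Mass bounds.} First I would feed elementary test functions into \eqref{constnash1}--\eqref{constnash2}. Taking $u\equiv1$ in \eqref{constnash2} gives $\hat\mu^i([0,T]\times\overline{\mathcal O}_i)=\hat\nu^i_0(\overline{\mathcal O}_i)$, whence \eqref{constnu} yields $\nu^i([0,T]\times\mathcal A\times\overline{\mathcal O}_i)=\nu^i_0(\mathcal A\times\overline{\mathcal O}_i)+\hat\nu^i_0(\overline{\mathcal O}_i)$, and $u\equiv1$ in \eqref{constnash1} gives $\mu^i([0,T]\times\mathcal A\times\overline{\mathcal O}_i)=\nu^i([0,T]\times\mathcal A\times\overline{\mathcal O}_i)$. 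Using $u(t,a,x)=t$ (for which $\partial_t u+\mathcal L_i u\equiv1$) in \eqref{constnash1}, and $u(t,x)=t$ in \eqref{constnash2}, bounds $\int_0^T m^i_t(\mathcal A\times\overline{\mathcal O}_i)\,dt$ and $\int_0^T\hat m^i_t(\overline{\mathcal O}_i)\,dt$ by $T$ times the respective exit masses. Hence all six total masses are bounded by a constant depending only on the fixed masses of $\hat\nu^i_0$ and $\nu^i_0$.

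\emph{Tightness.} For a renewable technology $\overline{\mathcal O}_i$ is compact and there is nothing to do; for a conventional technology $\overline{\mathcal O}_i=[0,\infty)$, and I would exploit the mean reversion of $\widehat{\mathcal L}_i$. For $\varphi(x)=\ln(1+x)$ one checks $\widehat{\mathcal L}_i\varphi(x)=k^i(\theta^i-x)/(1+x)-(\delta^i)^2x/(2(1+x)^2)\le k^i\theta^i$, i.e.\ $\widehat{\mathcal L}_i\varphi$ is bounded above; since $\varphi$ itself is unbounded and hence not admissible, the estimates must be run with bounded, smooth, concave truncations $\varphi_n\uparrow\varphi$ chosen so that $\widehat{\mathcal L}_i\varphi_n\le k^i\theta^i$ uniformly, followed by monotone convergence. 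Applying \eqref{constnash2} to $\hat u=\varphi_n$ and using the mass bounds gives $\int\varphi_n\,d\hat\mu^i\le\int\varphi_n\,d\hat\nu^i_0+k^i\theta^i\int_0^T\hat m^i_t(\overline{\mathcal O}_i)\,dt$, uniformly bounded thanks to the log‑moment hypothesis \eqref{integr.eq}; then \eqref{constnu} and \eqref{integr.eq} bound $\int\varphi_n\,d\nu^i$, and \eqref{constnash1} with $u=\varphi_n$ bounds $\int\varphi_n\,d\mu^i$. Finally \eqref{constnash1} with $u(t,a,x)=(T-t)\varphi_n(x)$, for which $\partial_t u+\mathcal L_i u=-\varphi_n(x)+(T-t)\widehat{\mathcal L}_i\varphi_n(x)$, yields — after discarding the nonnegative term $\int(T-t)\varphi_n\,d\mu^i$ — a uniform bound on $\int_0^T\!\!\int\varphi_n(x)\,m^i_t(da,dx)\,dt$, and likewise on $\int_0^T\!\!\int\varphi_n\,\hat m^i_t\,dt$. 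Letting $n\to\infty$ and invoking Markov's inequality, the five families $\mu^i,\hat\mu^i,\nu^i,\,m^i_t\,dt,\,\hat m^i_t\,dt$ are tight in $x$; with $[0,T]\times\mathcal A$ compact and the mass bounds, Prokhorov's theorem makes $\mathcal R_i$ relatively compact in the product topology. I expect this Lyapunov--truncation step to be the main obstacle, as it is the only place where the exact form of the generators and the precise integrability hypothesis \eqref{integr.eq} enter in an essential way.

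\emph{Closedness.} It remains to check that a weak limit of a sequence $(\hat\mu^{i,n},\hat m^{i,n}_t,\mu^{i,n},m^{i,n}_t)_n$ in $\mathcal R_i$ is again in $\mathcal R_i$. For any fixed admissible $u$ the function $\partial_t u+\mathcal L_i u$ is bounded continuous, so the three integrals in \eqref{constnash1} (resp.\ \eqref{constnash2}) are weakly continuous and — since tightness rules out escape of mass — the identities pass to the limit; $\hat\nu^i=\hat\nu^i_0\otimes\delta_0(dt)$ is fixed, and $\nu^{i,n}=\nu^i_0\,\delta_0(dt)+\hat\mu^{i,n}\,\delta_0(da)$ converges to $\nu^i_0\,\delta_0(dt)+\hat\mu^i\,\delta_0(da)$ because pushforward under $(t,x)\mapsto(t,0,x)$ is weakly continuous, so \eqref{constnuhat}--\eqref{constnu} survive. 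To see that the limiting occupation measures keep the flow form $m^i_t\,dt$ with $(m^i_t)\in\mathcal V_i$, take time‑only test functions $u=u(t)$ in \eqref{constnash1}: this shows $g_n(t):=m^{i,n}_t(\mathcal A\times\overline{\mathcal O}_i)$ has distributional derivative $\mu^{i,n}_{\mathrm{time}}-\nu^{i,n}_{\mathrm{time}}$, hence is of uniformly bounded variation and, combined with the bound on $\int_0^T g_n$, uniformly bounded; so each $m^{i,n}_t\,dt$ has time‑marginal with density $\le C^\ast$ for a uniform $C^\ast$, and by the portmanteau theorem so does the weak limit, which is therefore absolutely continuous in $t$; disintegrating it against its time‑marginal produces the required flow in $\mathcal V_i$, and analogously for $\hat m^i_t\,dt$. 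Thus $\mathcal R_i(\hat\nu^i_0,\nu^i_0)$ is relatively compact and closed, hence compact.
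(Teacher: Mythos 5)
Your proof is correct and follows essentially the same route as the paper: uniform mass bounds from elementary test functions, tightness in the state variable via truncated logarithmic Lyapunov functions of the form $(T-t)\varphi_n(x)$ exploiting the mean reversion of $\widehat{\mathcal L}_i$ and the log-moment hypothesis \eqref{integr.eq}, then Prokhorov plus closedness. The only difference is that you spell out the closedness step (in particular why the weak limit of $m^{i,n}_t\,dt$ retains the flow form via the uniform BV bound on the time marginal), which the paper dispatches with the remark that $\mathcal R_i$ is closed by construction.
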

\begin{proof}
The proof is similar to that of Theorem 2.13 in \cite{dumitrescu2021control}. Since $\mathcal R_i$ is a subset of a metrizable space, it is enough to show sequential compactness.  Consider then a sequence $(\hat \mu^{i,n}, \hat m^{i,n}, \mu^{i,n}, m^{i,n})_{n\geq 1} \subset \mathcal R_i$. First, use the test function $\hat u(t,x) = \int_t^T f(s)ds$, where $f$ is nonnegative bounded continuous. Using this test function in \eqref{constnash2} yields
\begin{align}
\int_0^T f(s) ds \int_{\overline O_i} \hat m^{i,n}(dx)\leq \int_0^T f(s) ds \int_{\overline O_i} \hat \nu^i_0(dx). \label{ubound}
\end{align}
A simple limiting argument (see Lemma 2.8 in \cite{dumitrescu2021control}) then shows that 
$$
\int_{ \overline O_i} \hat m^{i,n}(dx)\leq  \int_{\overline O_i} \hat \nu^i_0(dx)
$$
$t$-almost everywhere. Now use the test function $\hat u_k(t, x)=(T+1-t)\phi_k(x)$, where
$$
\phi_k(x)=\ln \left\{1+|x|^{3}\left(\frac{3 x^{2}}{5 k^{2}}-\frac{3|x|}{2 k}+1\right)\right\} \mathbf{1}_{|x| \leq k}+\ln \left\{1+\frac{k^{3}}{10}\right\} \mathbf{1}_{|x|>k}.
$$
For each $k\geq 1$, $\hat u_k\in C^{1, 2}_b([0, T]\times \bar{\mathcal{O}})$ and $\hat u_k$ is non-negative.   
Using this test function in \eqref{constnash2}, yields
\begin{multline*}
(T+1)\int_{ \overline{\mathcal  O}_i} \phi^k(x) \hat\nu^i_0(dx) \\+ \int_{[0,T] \times \overline{\mathcal  O}_i} \left\{-\phi^k(x) + (T+1-t)\mathcal L_i \phi^k(x)\right\}\hat m^{i,n}_t(dx)\, dt \geq 0.
\end{multline*}
Since there exists a constant $C\geq 0$ independent from $k$ such that $\phi_k'$ and $\phi''_k$ are bounded by $C$, together with \eqref{ubound}, we obtain that
\begin{align*}
  \int_{[0,T]\times \mathcal A \times \overline{\mathcal  O}_i} \phi^k(x) \hat m^{i,n}_t(dx)\, dt \leq (T+1)\int_{ \overline{\mathcal  O}_i} \phi^k(x) \hat\nu^i_0(dx) + C',
\end{align*}
for some constant $C'$ which does not depend on $n$ or $k$. Since, for each $x$, the sequence $(\phi_k(x))_{k\geq 1}$ is non-decreasing and converges to $\phi_k = \ln (1+|x|^3)$,  we conclude by monotone convergence and the assumption of the lemma, that 
\begin{align}
  \sup_n \int_{[0,T] \times \overline{\mathcal  O}_i} \phi(x) \hat m^{i,n}_t(dx)\, dt \leq (T+1)\int_{ \overline{\mathcal  O}_i} \phi(x) \hat\nu^i_0(dx) + C'<\infty,\label{boundmhat}
\end{align}
which proves the tightness of the sequence $(\hat m^{i,n})_{n\geq 1}$. 

In a similar way, the condition \eqref{constnash2} with the same test function yields a bound on $\hat \mu^{i,n}$:
\begin{align*}
 \sup_n\int_{[0,T] \times \overline{\mathcal  O}_i} \phi(x)\hat \mu^{i,n}(dt,dx)\leq (T+1)\int_{ \overline{\mathcal  O}_i} \phi(x) \hat\nu^i_0(dx)+ C'<\infty,
\end{align*}
which proves the tightness of this family of measures. 

Now, using the test function $u(t,a,x) = \int_t^T f(s) ds$  in the condition \eqref{constnash1}, and remarking that 
$$
\int_{[0,T]\times \mathcal A \times \overline{\mathcal  O}_i} \hat \mu^{i,n}(dt,da,dx) = \int_{\overline{\mathcal  O}_i} \hat \nu^{i}_0(dx),
$$
we obtain that 
$$
\int_{\mathcal A \times \overline O_i}  m^{i,n}(da,dx)\leq  \int_{\overline O_i} \hat \nu^i_0(dx) + \int_{\mathcal A \times \overline O_i} \nu^i_0(da, d x)
$$
$t$-almost everywhere. Finally, using the test function $u_k(t,a,x) = (T+1-t)\phi_k(x)$ in \eqref{constnash1} shows, in a similar way as above, that 
\begin{align*}
 & \sup_n \int_{[0,T]\times \mathcal A \times \overline{\mathcal  O}_i} \phi(x) m^{i,n}_t(da,dx)\, dt \leq (T+1)\int_{ \overline{\mathcal  O}_i} \phi(x) \hat\nu^i_0(dx) \\&\qquad \qquad +(T+1)\int_{\mathcal A \times \overline O_i} \phi(x)\nu^i_0(da, d x)+ C^{\prime\prime}<\infty,\\
 &\sup_n\int_{[0,T]\times \mathcal A \times \overline{\mathcal  O}_i} \phi(x) \mu^{i,n}(dt,da,dx)\leq (T+1)\int_{ \overline{\mathcal  O}_i} \phi(x) \hat\nu^i_0(dx)\\ &\qquad \qquad +(T+1)\int_{ \mathcal A\times \overline{\mathcal  O}_i} \phi(x) \nu^i_0(da,dx)+ C^{\prime\prime}<\infty,
\end{align*}
which proves the tightness of the corresponding sequences. Together, all our estimates prove that the sequence $(\hat \mu^{i,n}, \hat m^{i,n}, \mu^{i,n}, m^{i,n})_{n\geq 1}$ has a subsequence which converges to a 4-uplet $(\hat \mu^{i}, \hat m^{i}, \mu^{i}, m^{i})$. Since by construction the set $\mathcal R_i(\hat\nu^i_0,\nu^i_0)$ is closed with respect to weak convergence, $(\hat \mu^{i}, \hat m^{i}, \mu^{i}, m^{i})\subset \mathcal R_i$ and the proof is complete. 
\end{proof}
We now prove a technical result regarding a bounded variation property of some specific functionals, which will be used throughout the proofs.
\begin{lemma}[A bounded variation property]\label{BV} Fix $i=N+1, \ldots, N+\bar{N}$. Let $h \in C_b^{1,1,2}([0,T] \times \mathcal{A} \times \overline{\mathcal{O}}_{i})$, $\hat h \in C_b^{1,2}([0,T]  \times \overline{\mathcal{O}}_{i})$ and $(m^{i},\hat{m}^{i},\mu^{i},\hat{\mu}^{i}) \in \mathcal{R}_i$. Then, for every $\psi \in C^{1}([0,T]),$
\begin{align*}
\int_0^T \psi^\prime(t)\left(\int_{\mathcal{A} \times \overline{\mathcal{O}}_{i}}h(t,a,x)m^{i}_t(da,dx)\right)dt \leq C|\!|\psi|\!|_{\infty}.
\end{align*}
and
\begin{align*}
\int_0^T \psi^\prime(t)\left(\int_{\overline{\mathcal{O}}_{i}}h(t,x)\hat{m}^{i}_t(ddx)\right)dt \leq C|\!|\psi|\!|_{\infty}.
\end{align*}
For $i=1,\ldots, N$, a similar property holds with functions $h$ which do not depend on $x$.
\end{lemma}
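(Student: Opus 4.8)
The plan is to exploit the linear programming constraints \eqref{constnash1}--\eqref{constnash2} together with a clever choice of test function whose time-derivative reproduces $\psi'(t) h(t,\cdot)$. The key observation is that if $u\in C^{1,1,2}_b$ is a test function, then the constraint \eqref{constnash1} controls the integral of $\{\partial_t u + \mathcal L_i u\}$ against $m^i_t\,dt$ in terms of the (bounded) total masses of $\nu^i$ and $\mu^i$. So I would like to pick $u(t,a,x) = \psi(t) h(t,a,x)$ — but this does not quite work directly, since then $\partial_t u = \psi' h + \psi \partial_t h$, and one picks up the extra term $\psi(\partial_t h + \mathcal L_i h)$, which is bounded but multiplied by $\|\psi\|_\infty$, which is exactly the bound we want. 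So in fact this does work: with $u = \psi h$ one gets
\begin{align*}
\int_0^T \psi'(t)\!\!\int_{\mathcal A\times\overline{\mathcal O}_i}\!\! h\, m^i_t(da,dx)\,dt
&= \int u(t,a,x)\,\mu^i(dt,da,dx) - \int u(t,a,x)\,\nu^i(dt,da,dx)\\
&\qquad - \int_0^T\!\! \psi(t)\!\!\int_{\mathcal A\times\overline{\mathcal O}_i}\!\!\{\partial_t h + \mathcal L_i h\}\, m^i_t(da,dx)\,dt.
\end{align*}
Each term on the right is bounded by a constant times $\|\psi\|_\infty$: the first two because $|u|\le \|\psi\|_\infty \|h\|_\infty$ and $\mu^i,\nu^i$ have finite (and, for $\nu^i$, a priori bounded) total mass; the third because $\partial_t h + \mathcal L_i h$ is bounded (here I use $h\in C^{1,1,2}_b$ and the explicit form of $\widehat{\mathcal L}_i$, noting the coefficient $x$ or $x(1-x)$ is bounded on $\overline{\mathcal O}_i$ since $\overline{\mathcal O}_i$ is the closure of a bounded set — this is where the renewable vs.\ conventional distinction matters: for $i\le N$ the state space $\mathcal O_i$ need not be bounded, which is why we restrict $h$ to be $x$-independent there, so that $\mathcal L_i h = \partial_a h$ is automatically bounded).

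The steps, in order: (i) fix $\psi\in C^1([0,T])$ and set $u(t,a,x) := \psi(t) h(t,a,x)$, check $u\in C^{1,1,2}_b$; (ii) substitute $u$ into \eqref{constnash1} and rearrange to isolate $\int_0^T \psi'(t)\int h\, m^i_t\,dt$ as above; (iii) bound $\int |u|\,d\mu^i \le \|\psi\|_\infty\|h\|_\infty \mu^i([0,T]\times\mathcal A\times\overline{\mathcal O}_i)$, and similarly for $\nu^i$, using that the total masses of $\mu^i$ and $\nu^i$ are finite — in fact $\nu^i$ has mass bounded by the initial masses via the constraints, and $\mu^i$'s mass is bounded using the test function $u\equiv 1$ (or $u=\int_t^T f$) in \eqref{constnash1} exactly as in the compactness lemma; (iv) bound the remaining term by $\|\psi\|_\infty \sup_{t}\|(\partial_t h + \mathcal L_i h)(t,\cdot)\|_\infty \cdot \sup_n\int_0^T m^i_t(\mathcal A\times\overline{\mathcal O}_i)\,dt$, the last factor being finite by the $t$-a.e.\ bound on the total mass of $m^i_t$ established in the compactness lemma; (v) collect the constants into a single $C$ independent of $\psi$. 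The argument for $\hat m^i$ via \eqref{constnash2} and $u(t,x)=\psi(t)\hat h(t,x)$ is identical, using $\widehat{\mathcal L}_i$ in place of $\mathcal L_i$.

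The main (and only real) obstacle is verifying that $\mathcal L_i h$ is bounded, i.e.\ controlling the unbounded drift/diffusion coefficients against the non-compact state space $\mathcal O_i$ for conventional agents; this is precisely why the lemma is stated for $i=N+1,\dots,N+\overline N$ with general $h$ and only claims the conclusion for $x$-independent $h$ when $i\le N$. For $x$-independent $h$ one has $\mathcal L_i h = \partial_a h$, which is bounded since $h\in C^{1,1,2}_b$. For renewable agents, $\mathcal O_i$ (hence $\overline{\mathcal O}_i$, the capacity-factor space, a subset of $[0,1]$) is effectively bounded, the coefficients $k^i(\theta^i - x)$ and $(\delta^i)^2 x(1-x)/2$ are bounded on it, so $\mathcal L_i h$ is bounded for any $h\in C^{1,1,2}_b$. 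Everything else is a routine rearrangement of the linear constraint plus the mass bounds already in hand from the compactness lemma.
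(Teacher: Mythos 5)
Your proof is correct and follows essentially the same route as the paper: the paper's (very terse) proof likewise plugs the test function $u=-\psi(t)h(t,a,x)$ (resp.\ $\hat u=-\psi(t)\hat h(t,x)$) into the constraints \eqref{constnash1}--\eqref{constnash2} and invokes the boundedness of $h$ and its derivatives, of the diffusion coefficients of $S$, and of the measure masses from the compactness lemma. Your write-up simply fills in the rearrangement and the term-by-term estimates, and correctly identifies why the conventional case ($i\le N$) requires $x$-independent $h$ (unbounded drift and diffusion coefficient of the CIR-type cost process on a non-compact state space).
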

\begin{proof}
Fix $i=N+1,\ldots, N+\bar{N}$.  Consider the test functions $u(t,a,x)=-\psi(t)h(t,a,x)$ and $\hat u(t,x)=-\psi(t)h(t,x)$. Using again the constraints \eqref{constnash1}-\eqref{constnash2}, the boundedness of $h$ and its derivatives, the boundedness of the coefficients of the diffusion processs $S$, together with the boundedness of  the flow of measures $m^{i}$ and $\hat{m}^{i}$ proved in the previous lemma, the result follows.
\end{proof}

The following lemma establishes the existence of measures maximizing the gain functions for \emph{fixed price trajectories}.
\begin{lemma}
Fix $i\in\{1,\dots,N+\overline N\}$ and assume that the initial measures $\hat \nu^i_0$ and $\nu^i_0$ satisfy the assumptions \eqref{integr.eq}. 
Assume that the peak demand $D^p$, the off-peak demand $D^{op}$, the carbon price $P^C$ trajectories, the electricity price trajectories $P^p$ and $P^{op}$ and the fuel price trajectories $P^1,\dots,P^K$ are fixed and have finite variation on $[0,T]$. Then, for each $i$, there exist a 4-uplet
$$
(\hat \mu^i, (\hat m^i_t)_{0\leq t\leq T},\mu^i, ( m^i_t)_{0\leq t\leq T}) \in \mathcal R_i(\hat \nu^i_0,\nu^i_0),
$$
which maxmizes the functional \eqref{optconv.mfg} (if $i\in \{1,\dots,N\}$ or \eqref{optren.mfg} (if $i\in\{N+1,\dots,N+\overline N\}$. 
\end{lemma}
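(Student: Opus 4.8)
The plan is to combine the compactness of $\mathcal R_i(\hat\nu^i_0,\nu^i_0)$ from the previous lemma with upper semicontinuity of the gain functionals for fixed price trajectories. First I would fix $i$ and recall that by the preceding lemma $\mathcal R_i$ is a compact subset of the product space $\widehat{\mathcal M}_i\times\widehat{\mathcal V}_i\times\mathcal M_i\times\mathcal V_i$ endowed with the weak-convergence topologies. The gain functional \eqref{optconv.mfg} (resp. \eqref{optren.mfg}) is linear in the 4-uplet $(\hat\mu^i,\hat m^i,\mu^i,m^i)$, so the natural strategy is the direct method: take a maximizing sequence $(\hat\mu^{i,n},\hat m^{i,n},\mu^{i,n},m^{i,n})_{n\ge1}\subset\mathcal R_i$, extract by compactness a subsequence converging to some $(\hat\mu^i,\hat m^i,\mu^i,m^i)\in\mathcal R_i$, and show that the functional does not jump up in the limit, so that the limit is a maximizer.

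The main work is establishing the appropriate semicontinuity of each of the three terms in \eqref{optconv.mfg}/\eqref{optren.mfg} along the extracted subsequence. The running-gain term $\int_{[0,T]\times\mathcal A\times\overline{\mathcal O}_i} m^i_t(da,dx)\,e^{-\rho t}\lambda_i(a)(\cdots)\,dt$ is an integral of a (not necessarily bounded, not necessarily continuous) function against $m^i_t\,dt$. Since $G_i$ is Lipschitz with $G_i'=F_i$ bounded, the integrand grows at most linearly in $x$; because $\lambda_i$ is smooth and compactly supported in $a$, and because the price trajectories $P^p,P^{op},P^C,P^{k}$ have finite variation (hence are bounded and have at most countably many discontinuities), the integrand $(t,a,x)\mapsto e^{-\rho t}\lambda_i(a)(c_pG_i(\cdots)+c_{op}G_i(\cdots)-\kappa_i)$ is bounded and continuous off a $dt$-null set of times; together with the uniform bound $\sup_n\int\phi(x)\,m^{i,n}_t(da,dx)\,dt<\infty$ from the compactness proof (which controls the mass escaping to infinity in $x$), a standard truncation argument gives convergence of this term. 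I would handle the terminal-type integrals $-K_i\int\hat\mu^i(dt,dx)\,e^{-(\rho+\gamma_i)t}$ and $\widetilde K_i\int\mu^i(dt,da,dx)\,e^{-(\rho+\gamma_i)t}$ similarly: $e^{-(\rho+\gamma_i)t}$ is bounded continuous in $t$, the spatial dependence is trivial, and weak convergence of $\hat\mu^{i,n}\to\hat\mu^i$, $\mu^{i,n}\to\mu^i$ gives convergence provided there is no mass escape — which is ruled out by the uniform estimates on $\int\phi(x)\,\hat\mu^{i,n}$ and $\int\phi(x)\,\mu^{i,n}$ already proved. Since all three terms actually converge (not merely semicontinuously), the limit attains the supremum.

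The step I expect to be the main obstacle is the rigorous treatment of the boundary/atom contributions at $t=T$ when passing weak limits of the exit measures $\mu^{i,n}$ and $\hat\mu^{i,n}$: weak convergence on the closed set $[0,T]\times\cdots$ can in principle shift mass onto the terminal time slice, and one must verify that the bookkeeping identity $\mu^i(\{T\},\cdot)=m^i_T(\cdot)$-type relations (implicit in $\mathcal R_i$ being weakly closed) are preserved, so that the discounted terms behave well; this is exactly the place where the finite-variation assumption on demand and carbon price is used, to control the time-regularity of $m^i_t$ via the bounded-variation property of Lemma \ref{BV}. A secondary technical point is making the truncation-in-$x$ argument uniform in $n$ using \eqref{boundmhat} and its analogues; this is routine once the estimate is in hand. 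Modulo these, the proof is the standard lower-/upper-semicontinuity-plus-compactness argument for a linear functional on a compact set, following Theorem 2.13 and the surrounding results in \cite{dumitrescu2021control}.
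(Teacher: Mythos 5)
Your proposal is correct and follows essentially the same route as the paper, which (citing Lemma 3.2 of \cite{aid2020entry}) applies the direct method on the compact set $\mathcal R_i$ using the Lipschitz property of $G_i$; the only minor variation is that you handle the discontinuities of the BV price trajectories via a portmanteau/null-set argument (the discontinuity times form a $dt$-null set not charged by $m^i_t\,dt$), whereas the paper approximates the prices in $L^1([0,T])$ by uniformly bounded continuous maps and controls the error through the Lipschitz bound on $G_i$ and the uniform $t$-a.e.\ mass bound on $m^{i,n}_t$. These devices are interchangeable here, and your flagged obstacles are in fact already neutralized: the discount factors $e^{-(\rho+\gamma_i)t}$ are bounded and continuous on the closed interval $[0,T]$ so the exit-measure terms pass to the limit directly under weak convergence, and closedness of $\mathcal R_i$ (hence preservation of the constraints in the limit) was established in the preceding compactness lemma.
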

\begin{proof}
The proof uses the Lipschitz property of the gain function $G_i$ and the approximation of the price processes with uniformly bounded continuous mappings in $L^1([0,T])$. Since it is very similar to the proof of Lemma 3.2 in \cite{aid2020entry}, the details are omitted here to save space. 
\end{proof}
The following lemma establishes the properties of the price process. 
\begin{lemma}\label{conv}
${}$
\begin{itemize}
\item[i.]
Let 
$$
(\hat \mu^i, (\hat m^i_t)_{0\leq t\leq T}, \mu^i, ( m^i_t)_{0\leq t\leq T}) \in \mathcal R_i,\quad i=1,\dots,N+\overline N,
$$
and assume that the peak demand $D^p$, the off-peak demand $D^{op}$ and the carbon price $P^C$ trajectories have bounded variation on $[0,T]$. Then, the electricity price trajectories $P^p$ and $P^{op}$ and the fuel price trajectories $P^1,\dots,P^K$ trajectories have bounded variation on $[0,T]$ as well. 
\item[ii.] Let $$
\left((\hat{\mu}_n^1,\hat{m}_n^1,\mu_n^1,m_n^1), \ldots, (\hat{\mu}^{N+\overline N}_n,\hat{m}^{N+\overline N}_n,\mu^{N+\overline N}_n,m^{N+\overline N}_n)\right) \in \mathcal{R}_1 \times \ldots \times \mathcal{R}_{N+\bar{N}},
$$
be a sequence converging to  \begin{multline*}
\left((\hat{\mu}^{1,\star},\hat{m}^{1,\star},\mu^{1,\star},m^{1,\star}), \ldots, (\hat{\mu}^{N+\overline N,\star},\hat{m}^{N+\overline N,\star},\mu^{N+\overline N,\star},m^{N+\overline N,\star})\right) \\\in \mathcal{R}_1 \times \ldots \times \mathcal{R}_{N+\bar{N}}.
\end{multline*}
Assume that the peak demand $D^p$, the off-peak demand $D^{op}$ and the carbon price $P^C$ trajectories have bounded variation on $[0,T]$. Then there exists a subsequence $(n_l)$ such that $P_t^{p,n_l}$, $P_t^{op,n_l}$ and $P_t^{k,n_l}$  converge in $L^1([0,T])$ to $P_t^{p,\star}$, $P_t^{op,\star}$ and $P_t^{k,\star}$.
\end{itemize}
\end{lemma}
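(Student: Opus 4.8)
The plan is to establish (i) and (ii) in turn, in both cases exploiting the variational characterisation of the equilibrium price vector $p^*(t):=(P^p_t,P^{op}_t,P^1_t,\dots,P^K_t)$ supplied by Proposition~\ref{price.prop}: $p^*(t)$ is the unique minimiser over $\mathcal K:=[0,P^*]^2\times\mathbb R^K_+$ of $G_t$, and $G_t$ is strongly convex with a modulus $\alpha>0$ that is uniform in $t$ and in the occupation measure flows, because the terms $c_pG_0(P^p)$, $c_{op}G_0(P^{op})$, $\overline\Phi_k(P^k)$ contribute a positive-definite diagonal while the $G^k_t$ are convex ($G_i''=F_i'\ge0$). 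First I would record two a priori bounds, uniform over $\mathcal R_1\times\dots\times\mathcal R_{N+\overline N}$ and over the admissible data: by the compactness lemma the masses $m^i_t(\mathcal A\times\overline{\mathcal O}_i)$ are bounded, hence the conventional supply and fuel consumption are bounded (the $F_i$ are bounded), hence by \eqref{fprice} and the coercivity of $\Phi_k=\overline\Phi_k'$ (from strong convexity of $\overline\Phi_k$) the fuel prices stay in a fixed compact set; together with $(P^p_t,P^{op}_t)\in[0,P^*]^2$ this shows $p^*(t)$ ranges over a fixed compact $\mathcal K'\subset\mathcal K$.

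For (i), I would combine the first-order variational inequalities for $p^*(t)$ and $p^*(s)$ over $\mathcal K$ with uniform strong convexity to get the standard stability estimate $\alpha|p^*(t)-p^*(s)|\le|\nabla_pG_t(p^*(s))-\nabla_pG_s(p^*(s))|$, which, by working with variational inequalities rather than $\nabla_pG_t=0$, also covers the loss-of-load regime. Summing over a partition and dominating each term by $\sup_{p\in\mathcal K'}|\nabla_pG_{t_k}(p)-\nabla_pG_{t_{k-1}}(p)|$, it remains to show that $t\mapsto\nabla_pG_t(p)$ has bounded variation uniformly in $p\in\mathcal K'$. Its components are affine combinations of $(D^p_t-R_t)^+$, $(D^{op}_t-R_t)^+$, $P^C_t$ and the conventional supply/consumption functionals $\int m^i_t(da,dx)\lambda_i(a)F_i(q_1-f_ie_{k(i)}P^C_t-f_iq_2-x)$. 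The demand and carbon price are of finite variation by hypothesis; $R_t$ is of finite variation by Lemma~\ref{BV} applied with $h(t,a,x)=\lambda_i(a)x$ (bounded, together with $\mathcal L_ih$, for the renewable indices); and the conventional functionals are treated with Lemma~\ref{BV} as well — here one exploits that the merit-order cut-off confines the integrand in $x$ to $x\le P^*$, so that after a smooth truncation the test function has a bounded extended generator despite the linear growth of the CIR coefficients, while the extra $t$-dependence through $P^C_t$ is absorbed using the Lipschitz property of $F_i$ and $\mathrm{Var}(P^C)<\infty$. This bounds $\sum_k|p^*(t_k)-p^*(t_{k-1})|$ uniformly over partitions.

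For (ii), the constants above depend on the measure flows only through the uniform mass bounds and on $\mathrm{Var}(D^p),\mathrm{Var}(D^{op}),\mathrm{Var}(P^C)$, so the sequence $(P^{p,n},P^{op,n},P^{k,n})_n$ is uniformly bounded with uniformly bounded total variation on $[0,T]$. By Helly's selection theorem (component by component) a subsequence converges pointwise, hence in $L^1([0,T])$ by dominated convergence, to limits $\overline P^p,\overline P^{op},\overline P^k$; it then remains to identify these with the prices $P^{p,\star},P^{op,\star},P^{k,\star}$ of the limiting measures. For this I would upgrade the given convergence: since $m^{i,n}_t\,dt\to m^{i,\star}_t\,dt$ weakly on $[0,T]\times\mathcal A\times\overline{\mathcal O}_i$ and, by Lemma~\ref{BV}, $t\mapsto\int v\,dm^{i,n}_t$ has uniformly bounded variation for each fixed $v\in C^2_b$, a diagonal Helly argument over a countable convergence-determining family of such $v$ produces a further subsequence along which $m^{i,n}_t\to m^{i,\star}_t$ weakly, and $R^n_t\to R^\star_t$, for Lebesgue-a.e.\ $t$; for such $t$, $G^n_t\to G^\star_t$ uniformly on $\mathcal K'$, so continuity of the argmin of a uniformly strongly convex function gives $p^{*,n}(t)\to p^{*,\star}(t)$, and comparison with the $L^1$-limit yields $\overline P^\bullet=P^{\bullet,\star}$ a.e.

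I expect the main obstacle to be the bounded-variation estimate in (i): the occupation measures are only ``of bounded variation when tested against smooth functions'' (Lemma~\ref{BV}), yet they enter $\nabla_pG_t$ simultaneously with the carbon price, which is itself only of bounded variation, so disentangling the two contributions — and handling the unbounded CIR coefficients for conventional producers — is the technical core, to be carried out with the appropriate modifications of the scheme of \cite{aid2020entry}. A secondary delicate point, in (ii), is the almost-everywhere-in-$t$ upgrade of the weak convergence of occupation measures, for which the uniform-in-$n$ form of Lemma~\ref{BV} is precisely what is needed.
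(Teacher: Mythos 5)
Your overall architecture is sound and, for part (i), genuinely different in presentation from the paper's: you characterize the price vector as the constrained minimizer of the uniformly strongly convex $G_t$ and invoke argmin-stability, $\alpha\,|p^*(t)-p^*(s)|\leq \sup_{p\in\mathcal K'}|\nabla_p G_t(p)-\nabla_p G_s(p)|$, whereas the paper constructs explicit approximating price maps $\Theta^p_n,\Theta^{op}_n,\Theta^1_n,\dots,\Theta^K_n$ of discretized, mollified offer functions and transfers bounded variation through their Lipschitz bounds (Lemma \ref{mapping.lm}). Your a priori bound on the fuel prices coincides with the paper's step 2, and your remark that the test functions $\lambda_i(a)F_i(q-x)$ have compactly supported $x$-dependence (so that the CIR generator stays bounded) correctly addresses the caveat in Lemma \ref{BV} for conventional producers. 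Part (ii) via Helly selection plus identification of the limit is also consistent with the paper's strategy (which relies on the same BV-uniformity and an $L^1$-compactness theorem for BV functions).

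The gap is in the one estimate everything hinges on. After summing the stability inequality over a partition you need
$\sum_k \sup_{p\in\mathcal K'}\big|\nabla_p G_{t_k}(p)-\nabla_p G_{t_{k-1}}(p)\big|\leq C$,
i.e.\ bounded variation of $t\mapsto \nabla_p G_t(\cdot)$ \emph{as a map into $C(\mathcal K')$ with the sup norm}. What you reduce to --- ``$t\mapsto\nabla_pG_t(p)$ has bounded variation uniformly in $p$'' --- is strictly weaker: for each fixed $p$, Lemma \ref{BV} does give $\mathrm{Var}\big(t\mapsto\int m^i_t(da,dx)\lambda_i(a)F_i(q-\cdot)\big)\leq C$ with $C$ uniform in $q$, but pointwise-in-$q$ BV with a uniform constant does not imply BV in the sup norm (compare $f(t,q)=\mathbf 1_{t\geq q}$: each section has variation $1$, yet $\sum_k\sup_q|f(t_k,q)-f(t_{k-1},q)|$ equals the number of partition points). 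Upgrading to the sup-norm statement is exactly what the paper's steps 3--5 accomplish: one tests the linear constraint against the \emph{summed} function $u=-\sum_q\psi_q(t)\lambda_i(a)F_i(q\overline P/n-x)$ to get the vector-valued bound $\sum_q\int_0^TF^{i,n}_q\psi_q'\,dt\leq C\max_t\sum_q|\psi_q(t)|$ uniformly in $n$, mollifies, and then uses that each $\Theta^\star_n$ has summed derivatives $\sum_{i,q}|\partial\Theta^\star_n/\partial\xi^i_q|\leq C$ because only finitely many are nonzero. Your proposal defers precisely this step to ``appropriate modifications of the scheme of \cite{aid2020entry}'' without supplying it, and as stated the reduction would not close; the same uniform (in $n$) vector-valued BV bound is also what your part (ii) needs to make the Helly constants independent of $n$.
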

\begin{proof}
The proof follows that of Lemma 3.3 in \cite{aid2020entry} with some important changes and additions due to the fact that the price vector is now multidimensional (it includes not only the electricity price but also the endogenous fuel prices). We shall provide details of the changes and refer to \cite{aid2020entry} for the parts of the proof, which are very similar to that reference. 
\paragraph{Part i.} The first step is to show that the peak residual demand $((D^p_t - R_t)^+)_{0\leq t\leq T}$ and the off-peak residual demand $((D^{op}_t - R_t)^+)_{0\leq t\leq T}$ have bounded variation on $[0,T]$. This is done similarly to the first step of \cite{aid2020entry} using Lemma \ref{BV}.

In the second step we prove that the fuel prices $P^1,\dots,P^K$ are bounded. To this end, observe that the function $G_t$ defined in the proof of Proposition \ref{price.prop}, equals zero when all prices are zero, and admits the following lower bound:
\begin{align*}
G_t (P^p, P^{op},P^1,\dots,P^K) \geq -P^*(c_p D^p_t + c_{op}D^{op}_t) + \sum_{k=1}^K \overline\Phi_k(P^k).
\end{align*}
Since the functions $\overline\Phi_k$ for $k=1,\dots,K$ are positive and strongly convex, it follows that $P^1_t,\dots,P^K_t$ admit an upper bound $\overline P$ (which may depend on the demand trajectories but they are fixed in this proof).  Without loss of generality (by increasing the constant $\overline P$ if needed), we assume that the trajectories of $|P^p - f_i (P^k + e_k P^C)|$  and  $|P^{op}- f_i (P^k + e_k P^C)|$  are also bounded from above by $\overline P$ for each $k=1,\dots,K$. 

In the third step we introduce the ``discretized'' offer functions. Namely for a fixed $n$, define
$$
F^{i,n}_{q}(t) = \int_{\mathcal A\times \overline{\mathcal O}_i} m^i_t(da,dx) \lambda_i(a) F_i(q\overline P/n - x)
$$
for $q=-n,\dots,n$. We can then show, by following the arguments in step 2 of \cite{aid2020entry}, that these functions have bounded variation on $[0,T]$, uniformly on $n,q$, in the sense that for every $n$ and every family of $C^1$ functions $\psi_{q}:[0,T]\to \mathbb R$, $q=-n,\dots,n$, we have
$$
\sum_{q=-n}^n \int_0^T F^{i,n}_{q}(t)\psi'_{q}(t) dt \leq C \max_{0\leq t\leq T} \sum_{q=-n}^n |\psi_{q}(t)|,
$$
where the constant $C$ does not depend on $q,n$.

In the fourth step we introduce the mollified offer functions. Let $\rho$ be a mollifier supported on $[-1,1]
$, set $\rho_m(x) = m\rho(mx)$ and define
$$
F^{i,n,m}_{q}(t):= \rho_m \star F^{i,n}_{q}(t),
$$
where $F^{i,n}_{q}$ is extended by zero value outside the interval $[0,T]$.  Then we can show, by following the arguments in step 3 of \cite{aid2020entry} that
$$
\int_0^T \max_{-n\leq q\leq n} \Big|\frac{d}{dt}F^{i,n,m}_{q}(t) \Big|dt \leq C,
$$
for a different constant $C$. 

In the final step, we introduce sequences of mappings $\Theta^p_n,\Theta^{op}_n,\Theta^1_n,\dots,\Theta^K_n$, $n\geq 1$, such that $\Theta^p_n$ and $\Theta^{op}_n$ approximate the peak and off-peak electricity price, and $\Theta^1_n,\dots,\Theta^K_n$ approximate the fuel prices in the following way: the peak price is approximated by the expression
\begin{align}
P^{p,n}_t  := \Theta^p_n((D^p_t-R_t)^+ , (D^{op}_t-R_t)^+, P^C_t, (F^{i,n}_{q}(t))^{i=1,\dots,N}_{q = -n,\dots,n}),\label{approxprice}
\end{align}
and similarly for the other prices. Lemma \ref{mapping.lm} shows that one may ideed construct such mappings with required properties. To complete the proof we then follow the arguments in step 4 of \cite{aid2020entry}. 
\paragraph{Part ii.} 
This part follows closely the proof of Part ii.~of Lemma 3.3 in \cite{aid2020entry}. 
\end{proof}
\begin{lemma}\label{mapping.lm}
  There exist sequences of mappings (the arguments in parentheses are the same for all mappings but we only spell them out for the first one): $$
  \Theta^p_n(u,v,w,(\xi^i_{q})^{i=1,\dots,N}_{q=-n,\dots,n}),\Theta^{op}_n,\Theta^1_n,\dots,\Theta^K_n,\ n\geq 1
  $$
  with the following properties:
  \begin{enumerate}
  \item $\Theta^p_n\leq P^*,\  \Theta^{op}_n\leq P^*,\ \Theta^1_n\leq \overline P,\dots,\Theta^K_n\leq \overline P$.
  \item There exists a constant $C$ such that $|P^{p,n}_t - P^p_t|\leq \frac{C}{n}$,   $|P^{op,n}_t - P^{op}_t|\leq \frac{C}{n}$,  $|P^{k,n}_t - P^k_t|\leq \frac{C}{n}$ for $k=1,\dots,K$, where the approximate price $P^{p,n}_t $ is defined in \eqref{approxprice} and the other approximate prices are defined similarly.
  \item  $\Theta^p_n,\Theta^{op}_n,\Theta^1_n,\dots,\Theta^K_n$ are differentiable, and their derivatives satisfy, on any compact set, 
    \begin{align*}
      &\Big|\frac{\partial \Theta^\star_n}{\partial u}\Big| \leq C , \quad \Big|\frac{\partial \Theta^\star_n}{\partial v}\Big| \leq C,\quad \Big|\frac{\partial \Theta^\star_n}{\partial w}\Big| \leq C \\
      &\text{and} \sum_{i=1,\dots,N, q = -n,\dots, n} \Big|\frac{\partial \Theta^\star_n}{\partial \xi^i_{q}}\Big| \leq C,
    \end{align*}
    where $C$ is a constant which does not depend on $n$, where $\star$ stands for $p$, $op$ or $1,\dots,K$.  
\end{enumerate} 
\end{lemma}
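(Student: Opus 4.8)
The plan is to realise each mapping $\Theta^\star_n$ as the constrained minimizer, over $[0,P^*]^2\times\mathbb R^K_+$, of a smoothed finite‑dimensional approximation of the potential $G_t$ introduced in the proof of Proposition~\ref{price.prop}. Recall that $G_t$ is $\alpha$‑strongly convex with a modulus $\alpha$ independent of the occupation measures, because its strongly convex part $\sum_k\overline\Phi_k(P^k)+c_pG_0(P^p)+c_{op}G_0(P^{op})$ is so, while the remaining terms $G^k_t(x,y)=\sum_{i:k(i)=k}g^i_t(x-f_ie_kw-f_iy)$, with $w=P^C_t$ and $g^i_t(p):=\int m^i_t(da,dz)\lambda_i(a)G_i(p-z)$, are convex, and the data enter linearly through $u=(D^p_t-R_t)^+$, $v=(D^{op}_t-R_t)^+$. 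The crucial structural observation is that $(g^i_t)'(p)=\int m^i_t(da,dz)\lambda_i(a)F_i(p-z)$, so that its values at the nodes $p=q\overline P/n$ are exactly the quantities $F^{i,n}_q(t)=:\xi^i_q$; moreover, since $F_i$ has bounded derivatives, $0\le\lambda_i\le1$, and $m^i_t$ has uniformly bounded mass (first lemma of this appendix), the functions $(g^i_t)'$ are nonnegative, nondecreasing, vanish on $(-\infty,0]$, and have first and second derivatives bounded by a constant independent of $n$ and $t$.

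First I would reconstruct $g^i_t$ from its samples by a linear scheme. Let $\widetilde\phi^{i,n}=\sum_{q=-n}^n\xi^i_q\Lambda_q$ be the continuous piecewise‑linear interpolant on the mesh $\{q\overline P/n\}$ (with $\Lambda_{-n}$, resp.\ $\Lambda_n$, prolonged by its constant value to the left, resp.\ right, so that $\sum_q\Lambda_q\equiv1$), and mollify: $\widehat\phi^{i,n}:=\rho_n\star\widetilde\phi^{i,n}$. Then $\widehat\phi^{i,n}$ is $C^\infty$, nonnegative, nondecreasing (mollification preserves monotonicity), obeys $\|\widehat\phi^{i,n}-(g^i_t)'\|_\infty=O(1/n)$ on $[-\overline P,\overline P]$, and is of the form $\widehat\phi^{i,n}=\sum_q\xi^i_qB^{i,n}_q$ with $B^{i,n}_q:=\rho_n\star\Lambda_q\ge0$ and $\sum_qB^{i,n}_q\equiv1$. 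Setting $\widehat g^{i,n}(p):=\int_0^p\widehat\phi^{i,n}$ gives a $C^\infty$, convex function which, since $g^i_t(0)=\widehat g^{i,n}(0)=0$, is $O(1/n)$‑close to $g^i_t$ on $[-\overline P,\overline P]$. Replacing each $G^k_t(x,y)$ in $G_t$ by $\widehat G^{k,n}_t(x,y):=\sum_{i:k(i)=k}\widehat g^{i,n}(x-f_ie_kw-f_iy)$ produces $\widehat G^n_t$, still $\alpha$‑strongly convex with the same $\alpha$, and I define $(\Theta^p_n,\Theta^{op}_n,\Theta^1_n,\dots,\Theta^K_n)(u,v,w,(\xi^i_q))$ to be its unique minimizer over $[0,P^*]^2\times\mathbb R^K_+$.

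The three properties then follow from standard perturbation arguments for strongly convex minimization. Property~1: $\Theta^p_n,\Theta^{op}_n\le P^*$ is built in, and $\Theta^k_n\le\overline P$ follows from the a priori bound $\widehat G^n_t\ge-P^*(c_pu+c_{op}v)+\sum_k\overline\Phi_k(P^k)$ together with $\widehat G^n_t(0,\dots,0)=0$, exactly as in step~2 of the proof of Lemma~\ref{conv} (which uses only $\widehat g^{i,n}\ge0$, $G_0\ge0$, and positivity and strong convexity of $\overline\Phi_k$); this puts both $P^*_t$ and $P^{*,n}_t$ in $[0,P^*]^2\times[0,\overline P]^K$. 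Property~2: on that compact set $\|\nabla\widehat G^n_t-\nabla G_t\|\le C\sum_i\|\widehat\phi^{i,n}-(g^i_t)'\|_\infty\le C/n$, so combining the first‑order variational inequalities for the two constrained minimizers with the $\alpha$‑strong monotonicity of $\nabla G_t$ yields $\|P^{*,n}_t-P^*_t\|\le C/(\alpha n)$. Property~3: applying the implicit function theorem to the KKT conditions on any region where the active set is locally constant, the minimizer is a smooth function of $(u,v,w,(\xi^i_q))$ with Jacobian $-H^{-1}\partial_{(u,v,w,\xi)}\nabla_P\widehat G^n_t$ restricted to the active face, where $\|H^{-1}\|\le1/\alpha$; since $u,v$ enter linearly, $w$ enters through $\widehat g^{i,n}$ with $\|(\widehat g^{i,n})''\|_\infty=\|(\widehat\phi^{i,n})'\|_\infty\le C$ uniformly in $n$, and $\partial_{\xi^i_q}\partial_{P^\bullet}\widehat G^n_t$ is a bounded constant times $B^{i,n}_q$ at a single point, one gets $\sum_q|\partial_{\xi^i_q}\nabla_P\widehat G^n_t|\le C\sum_qB^{i,n}_q\le C$, hence $\sum_{i,q}|\partial_{\xi^i_q}\Theta^\star_n|\le C/\alpha$, plus the analogous bounds for $\partial_u,\partial_v,\partial_w$, all uniform in $n$. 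Across the lower‑dimensional set where the active set changes the minimizer remains Lipschitz with the same constants; if genuine global differentiability is wanted one may instead add an $o(1/n)$ smooth log‑barrier enforcing $P^k>0$, $P^p,P^{op}\in(0,P^*)$, which keeps the minimizer interior and $C^\infty$ without spoiling Property~2.

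The delicate point is Property~3, namely controlling $\sum_{i,q}|\partial\Theta^\star_n/\partial\xi^i_q|$ uniformly in $n$ although the number of these arguments grows like $n$. This is exactly why the offer functions must be reconstructed by a \emph{linear} scheme whose basis functions form a partition of unity: each datum $\xi^i_q$ perturbs $\widehat G^n_t$ only through the single bump $B^{i,n}_q$, and these bumps add up to one, so the aggregate sensitivity of the strongly convex minimizer stays bounded by $1/\alpha$ times a constant independent of $n$.
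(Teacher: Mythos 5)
Your construction is essentially the paper's: you define the mappings as minimizers of a strongly convex surrogate potential obtained by linearly interpolating the sampled offer functions $\xi^i_q=F^{i,n}_q(t)$ on the mesh $q\overline P/n$, and you derive Properties 1--3 from the uniform strong convexity together with the locality (partition-of-unity) of the interpolation basis, which is exactly how the paper bounds $\sum_q|\partial\Theta^\star_n/\partial\xi^i_q|$ (it notes at most four of these derivatives are nonzero at a time). Your two refinements --- mollifying the interpolant so the surrogate is genuinely $C^2$ before invoking the implicit function theorem, and proving Property 2 via the gradient variational inequality (which gives the stated $O(1/n)$ rate, rather than the $O(1/\sqrt n)$ that a pure function-value comparison would yield) --- are welcome tightenings but do not change the route.
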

\begin{proof}
  To define these mappings, we consider the linear interpolation maps
  $$
\mathcal L_n(y,\xi_{-n},\dots,\xi_n) = ((q_y+1) \overline P/n - y)\xi_{q_y} + (y-q_y \overline P/n)\xi_{q_y + 1},
$$
for $y\in (-\overline P, \overline P)$, 
where $q_y = \max\{q:y\overline P/n<y\}$. From these interpolation mappings we define
\begin{align*}
{F}^{k,n}(x,y,(\xi^i_q)^{i=1,\dots,N}_{q=-n,\dots,n})&= \sum_{i: k(i) = k} \mathcal L_n(x- f_i y, \xi^i_{-n},\dots,\xi^i_n)\\
\Psi^{k,n}(x,y,(\xi^i_q)^{i=1,\dots,N}_{q=-n,\dots,n})&= \sum_{i: k(i) = k} f_i \mathcal L_n(x- f_i y, \xi^i_{-n},\dots,\xi^i_n).
\end{align*}
Let $\overline\Theta^p_n,\overline\Theta^{op}_n,\overline\Theta^1_n,\dots,\overline\Theta^K_n$, be  the solution to
\begin{align}
&  c_p \Psi^{k,n}(\theta^p, \theta^k)+c_{op} \Psi^{k,n}(\theta^{op}, \theta^k) = \Phi_k (\theta^k - e_k w),\quad k=1,\dots,K,\label{eqprice1}
\end{align}
where $\Phi_k$ is extended by zero to negative values, and
\begin{align}
  &  u =  F_0(\theta^p) + \sum_{k=1}^K\widehat{ F}^{k,n}(\theta^p, \theta^k) \ \text{or}\ u >  F_0(\theta^p) + \sum_{k=1}^K\widehat{F}^{k,n}(\theta^p, \theta^k) \ \text{and}\ \theta^p = P^*,\label{eqprice2}\\
  &  v =  F_0(\theta^{op}) + \sum_{k=1}^K\widehat{F}^{k,n}(\theta^{op}, \theta^k)\ \text{or}\ v >  F_0(\theta^{op}) + \sum_{k=1}^K\widehat{ F}^{k,n}(\theta^{op}, \theta^k)\ \text{and}\ \theta^{op} = P^*,\label{eqprice3}
\end{align}  
where we omitted the arguments $\xi^i_q$ to save space. The value of the mappings $\Theta^p_n,\Theta^{op}_n,\Theta^1_n,\dots,\Theta^K_n$ at the point $(u,v,(\xi^i_{q})^{i=1,\dots,N}_{q=-n,\dots,n})$ is defined as follows: 
$$
\Theta^p_n = \overline \Theta^p_n,\quad \Theta^{op}_n = \overline \Theta^{op}_n,\quad \Theta^1_n = \overline\Theta^1_n-e_1 w,\dots,\Theta^K_n = \overline\Theta^K_n-e_K w.
$$

To show that $\Theta^p_n,\Theta^{op}_n,\Theta^1_n,\dots,\Theta^K_n$ are uniquely determined, define 
$$
\overline{\mathcal L}_n(y,\xi_{-n},\dots,\xi_n):=\int_0^y \mathcal L_n(y',\xi_{-n},\dots,\xi_n) dy',
$$
and consider the mapping
\begin{align*}
&G^n(\theta^p, \theta^{op}, \theta^1,\dots,\theta^K):=\sum_{k=1}^K \sum_{i:k(i)=k}\{c_p \overline{\mathcal L}_n(\theta^p-f_i \theta^k)+c_{op} \overline{\mathcal L}_n(\theta^{op}-f_i \theta^k)\} \\ & \qquad +c_p G_0(\theta^p) + c_{op}G_0(\theta^{op})- c_p u \theta^p - c_{op} v\theta^{op}  + \sum_{k=1}^K \overline \Phi_k(\theta^k),
\end{align*}
where we used some notation of Proposition \ref{price.prop}. This mapping is strongly convex (due to strong convexity of $G_0$ and $\overline \Phi_1,\dots,\overline \Phi_K$) and the vector $(\overline\Theta^p_n,\overline\Theta^{op}_n,\overline\Theta^1_n,\dots,\overline\Theta^K_n)$ is its unique minimizer. 

Next, observe that by construction, 
$$
F^{i,n}_{q_y} \leq \mathcal L_n(y,F^{i,n}_{-n}(t),\dots,F^{i,n}_n(t))\leq  F^{i,n}_{q_y+1} 
$$
It follows that 
\begin{multline*}
\int_{\mathcal A\times \overline{\mathcal O}_i} m^i_t(da,dx) \lambda_i(a) F_i(y-\overline P/n - x)\leq 
 L_n(y,F^{i,n}_{-n}(t),\dots,F^{i,n}_n(t))\\\leq \int_{\mathcal A\times \overline{\mathcal O}_i} m^i_t(da,dx) \lambda_i(a) F_i(y+\overline P/n - x).
 \end{multline*}
 Integrating the upper bound, we get: 
 \begin{align*}
 \overline{\mathcal L}_n(y) &\leq \int_{\mathcal A\times \overline{\mathcal O}_i} m^i_t(da,dx) \lambda_i(a) \int_0^y F_i(y'+\overline P/n - x) dy' \\
 & \leq \int_{\mathcal A\times \overline{\mathcal O}_i} m^i_t(da,dx) \lambda_i(a) G_i(y-x) + \frac{C}{n},
 \end{align*}
 for some constant $C$, by the smoothness of $F_i$ and integrability of $m^i$. Proceeding similarly for the lower bound and summing up the terms, we finally get (with the notation of Proposition \ref{price.prop}): 
 \begin{multline*}
\Big| \sum_{k=1}^K \sum_{i:k(i)=k}\{c_p \overline{\mathcal L}_n(\theta^p-f_i \theta^k)+c_{op} \overline{\mathcal L}_n(\theta^{op}-f_i \theta_k)\} \\- \sum_{k=1}^K \{c_p  G^k_t(\theta^p,\theta^k)-c_{op} G^k_t(\theta^{op},\theta^k)\}
\Big|\leq \frac{C}{n}.  
\end{multline*}
Property 2 of the lemma now follows from strong convexity. 

We now concentrate on property 3. To save space, we denote simply $\pmb \theta:= (\theta^p,\theta^{op},\theta^1,\dots,\theta^K)$. 
Let $\pmb\Theta'_n$ be the minimizer of $G_n(\pmb\theta) -c_p(u'-u)\theta^p$. It is characterized by the equation
$$
\nabla G_n(\pmb\Theta'_n) = c^p(u'-u)\mathbf{e}^p,
$$
where $\mathbf{e}^p$ is the $K+2$-dimensional vector with $1$ in the first position and $0$ elsewhere. Differentiating both sides using the smoothness of $G^n$, we get: 
$$
\nabla^2 G^n(\pmb\Theta'_n)\frac{\partial\pmb\Theta'_n}{\partial u'} = c^p \pmb e^p. 
$$
By strong convexity, $\nabla^2 G^n(\pmb\Theta'_n)$ is invertible and bounded from below, whence the boundedness of $\frac{\partial\pmb\Theta'_n}{\partial u'}$. The boundedness of the derivative with respect to $v$ is obtained in a similar fashion. To analyze the derivative with respect to $w$, in a similar fashion, we may compute:
$$
\nabla^2 G^n(\pmb\Theta'_n)\frac{\partial\pmb\Theta'_n}{\partial w'} = \mathbf e_k \sum_{k=1}^K\Phi'_k (\theta^k-e_k w). 
$$
Since the RHS is bounded on compact set, we conclude that the derivative with respect to $w$ is bounded. 

Finally analyze the derivatives with respect to $\xi^i_q$, to save space, we denote by $\pmb\xi$ the family 
$(\xi^i_{q})^{i=1,\dots,N}_{q=-n,\dots,n})$, and denote by $\pmb\Theta'_n$ the minimizer of $G_n$ with $\pmb\xi$ replaced by $\pmb\xi'$. Then, proceding as above and
%\begin{multline*}
%\nabla G_n(\pmb\Theta'_n) + \sum_{k=1}^K \sum_{i:k(i)=k}(\mathbf{e}^p - f_i\mathbf{e}^k)\{c_p {\mathcal L}_n(\theta^p-f_i \theta^k,\pmb\xi^{i'})+c_{op} {\mathcal L}_n(\theta^{op}-f_i \theta^k,\pmb\xi^{i'})\\-c_p {\mathcal L}_n(\theta^{p}-f_i \theta^k,\pmb\xi^i)-c_{op} {\mathcal L}_n(\theta^{op}-f_i \theta^k,\pmb\xi^i)\}  = 0. 
%\end{multline*}
differentiating both sides with respect to $\xi^{i\prime}_q$, we get:
\begin{multline*}
\nabla^2 G_n(\pmb\Theta'_n)\frac{\partial \Theta'_n}{\partial \xi^{i}_q} + (\mathbf{e}^p - f_i\mathbf{e}^{k(i)})c_p\{ ((q_{y_p}+1) \overline P/n - y_p)\mathbf 1_{q=q_{y_p}} + (y_p-q_{y_p} \overline P/n)\mathbf 1_{q=q_{y_p}+1}\}\\+(\mathbf{e}^{op} - f_i\mathbf{e}^{k(i)})c_{op}\{ ((q_{y_{op}}+1) \overline P/n - y_{op})\mathbf 1_{q=q_{y_{op}}} + (y_{op}-q_{y_{op}} \overline P/n)\mathbf 1_{q=q_{y_{op}}+1}\},
\end{multline*}
where $y_p = \theta^{p\prime} - f_i \theta^{k\prime}$ and $y_{op} = \theta^{op\prime} - f_i\theta^{k\prime}$. By strong convexity we obtain the boundedness of $\frac{\partial \Theta'_n}{\partial \xi^{i}_q}$. In addition, for fixed $i$, among the derivatives $\frac{\partial \Theta'_n}{\partial \xi^{i}_q}$, $q=-n,\dots,n$, only at most four are nonzero (because of the indicator functions). Thus, sum of derivatives is bounded uniformly on $n$. 
\end{proof}  
\subsection{Proof of Theorem \ref{main.thm}}
For $i=1,\ldots, N+\bar{N}$, denote by ${\boldsymbol{m}^{i}}$ the family $(\hat{\mu}^{i},(\hat{m}^{i}_t)_{0 \leq t \leq T},\mu^{i},(m^{i}_t)_{0 \leq t \leq T})$ and ${\boldsymbol{{m}}}=(\boldsymbol{m}^1,\boldsymbol{m}^2,\ldots, \boldsymbol{m}^{N+\bar{N}})$. With this notation, the condition of the lemma simplifies to $\boldsymbol{m}_n\to \boldsymbol{m}^*$. 
We also define, for $i=1, \ldots, N$,
\begin{align*}
&\Psi^{i}(\boldsymbol{m}^{i},\boldsymbol{m})=\int_{[0,T]\times \mathcal A \times \overline{\mathcal O}_i} m^i_t(da, dx) e^{-\rho
  t}\lambda_i(a)\left[c_p G_i(P^p_t(\boldsymbol{m}) - e_i P^C_t-f_i P^{k(i)}_t(\boldsymbol{m}) - x )\right.\nonumber \\  & \left.+c_{op} G_i(P^{op}_t(\boldsymbol{m}) - e_i P^C_t-f_i P^{k(i)}_t(\boldsymbol{m}) - x )-\kappa_i \right]dt \notag\\&- K_i \int_{[0,T] \times \overline{\mathcal O}_i} \hat \mu^i(dt, dx) e^{-(\rho+\gamma_i)
  t} +
  \widetilde K_i \int_{[0,T]\times \mathcal A \times \overline{\mathcal O}_i}  \mu^i(dt, da, dx) e^{-(\rho+\gamma_i) t}
\end{align*}
  and, for $i=N+1,\ldots,N+\bar{N}$,
\begin{align*}
  &\bar{\Psi}^{i}(\boldsymbol{m}^i,\boldsymbol{m})=\int_{[0,T]\times \mathcal A \times \overline{\mathcal O}_i} m^i_t(da, dx) e^{-\rho
  t}\lambda_i(a)\left[
  (c_p P^p_t(\boldsymbol{m})+c_{op}P^{op}_t(\boldsymbol{m})) x - {\kappa_i}\right] dt \nonumber \\&- K_i \int_{[0,T] \times \overline{\mathcal O}_i} \hat \mu^i(dt,  dx) e^{-(\rho+\gamma_i)
  t}
  +
  \widetilde K_i \int_{[0,T]\times \mathcal A \times \overline{\mathcal O}_i}  \mu^i(dt, da, dx) e^{-(\rho+\gamma_i) t}.
\end{align*}
Consider the set valued map
$$\Theta: \mathcal{R}(\hat\nu^1_0,\nu^1_0)\times \ldots \times \mathcal{R}(\hat\nu^{N+\bar{N}}_0,\nu^{N+\bar{N}}_0)\to 2^{\mathcal{R}(\hat\nu^1_0,\nu^1_0)\times \ldots \times \mathcal{R}(\hat\nu^{N+\bar{N}}_0,\nu^{N+\bar{N}}_0)},$$
which is given by
\begin{multline}
\Theta:{\boldsymbol{m}} \mapsto \otimes_{i=1}^{N}\text{argmax}_{\boldsymbol{m}^{i} \in \mathcal R_i(\hat\nu^i_0,\nu^i_0)} \Psi^{i}(\boldsymbol{m}^{i},\boldsymbol{{m}})\\ \otimes_{i=N+1}^{N+\bar N}\text{argmax}_{\boldsymbol{m}^{i} \in \mathcal R_i(\hat\nu^i_0,\nu^i_0)} \bar{\Psi}^{i}(\boldsymbol{m}^{i},\boldsymbol{{m}}).
 \end{multline}
 To prove the existence of an equilibrium ${\boldsymbol{m}}^\star \in \mathcal{R}(\hat\nu^1_0,\nu^1_0)\times \ldots \times \mathcal{R}(\hat\nu^{N+\bar{N}}_0,\nu^{N+\bar{N}}_0)$ by using the  Fan-Glicksberg fixed
point theorem, we only need to show that $\Theta$ has closed graph, which is defined as 
$$\text{Gr}(\Theta)=\left\{({\boldsymbol{m}},{\boldsymbol{m}}') \in \left(\mathcal{R}(\hat\nu^1_0,\nu^1_0)\times \ldots \times \mathcal{R}(\hat\nu^{N+\bar{N}}_0,\nu^{N+\bar{N}}_0)\right)^2: {\boldsymbol{m}}' \in \Theta({\boldsymbol{m}})\right\}.$$
To show that $\text{Gr}(\Theta)$ is closed, we have to prove that for any sequence $({\boldsymbol{m}}_{1,n}, {\boldsymbol{m}}_{2,n})$ which weakly converges to $(\widetilde{{\boldsymbol{m}}}_1, \widetilde{{\boldsymbol{m}}}_2) \in \left(\mathcal{R}(\hat\nu^1_0,\nu^1_0)\times \ldots \times \mathcal{R}(\hat\nu^{N+\bar{N}}_0,\nu^{N+\bar{N}}_0)\right)^2$ such that ${\boldsymbol{m}}_{2,n} \in \Theta({\boldsymbol{m}}_{1,n})$, we have that $\widetilde{{\boldsymbol{m}}}_2 \in \Theta(\widetilde{{\boldsymbol{m}}}_1)$.

To prove this result, we fix first $i=N+1,\ldots N+\bar{N}$ and show that
\begin{align*}
& \lim_{n \rightarrow \infty} \int_{[0,T]\times \mathcal A \times \overline{\mathcal O}_i} m^i_{2,n,t}(da, dx) e^{-\rho
  t}\lambda_i(a)\left[
  (c_p P^p_t({\boldsymbol{m}}_{1,n})+c_{op}P_t^{op}({\boldsymbol{m}}_{1,n})) x - {\kappa_i}\right] dt \nonumber \\&- K_i \int_{[0,T]\times \overline{\mathcal O}_i} \hat \mu^i_{2,n}(dt, dx) e^{-(\rho+\gamma_i)
  t}
  +
  \widetilde K_i \int_{[0,T]\times \mathcal A \times \overline{\mathcal O}_i}  \mu^i_{2,n}(dt, da, dx) e^{-(\rho+\gamma_i) t} \nonumber \\
  & =  \int_{[0,T]\times \mathcal A \times \overline{\mathcal O}_i} \widetilde{m}^i_{2,t}(da, dx) e^{-\rho
  t}\lambda_i(a)\left[
  (c_p P^p_t(\widetilde{{\boldsymbol{m}}}_{1})+c_{op}P_t^{op}(\widetilde{{\boldsymbol{m}}}_{1})) x - {\kappa_i}\right] dt \nonumber \\&- K_i \int_{[0,T] \times \overline{\mathcal O}_i} \widetilde{\hat \mu}^i_2(dt, dx) e^{-(\rho+\gamma_i)
  t}
  +
  \widetilde K_i \int_{[0,T]\times \mathcal A \times \overline{\mathcal O}_i}  \widetilde{\mu}^i_2(dt, da, dx) e^{-(\rho+\gamma_i) t}. 
\end{align*}
Using Lemma \ref{BV}, we can prove that the total variation of the map $t \mapsto h(t)$, with $h(t)$ given by $h(t)=\int_{  \bar{\mathcal{O}}_i \times \mathcal{A}} x \lambda_i(a)m^i_{2,n,t}(da,dx)$ is uniformly bounded with respect to $n$, and therefore by Theorem 3.23 in \cite{ambrosio2000functions}, we get that there exists a subsequence $(n_k)$ such that the maps $\int_{\mathcal{A} \times \bar{\mathcal{O}}_i} x \lambda_i(a)m^i_{2,n_k,\cdot}(da,dx)$ converges in $L^1([0,T])$ to some limit which can be identified due to the weak convergence with $\int_{\mathcal{A} \times \bar{\mathcal{O}}_i} x \lambda_i(a)\widetilde{m}^i_{2,\cdot}(da,dx)$. Moreover, in view of Lemma \ref{conv} item ii., $P_t^p({\boldsymbol{m}}_{1,n})$ and $P_t^{op}({\boldsymbol{m}}_{1,n})$ converge (up to a subsequence) to $P^{p}_t(\widetilde{{\boldsymbol{m}}}_1)$ and $P^{op}_t(\widetilde{{\boldsymbol{m}}}_2)$. Since both terms are bounded, we derive that that the integral of their product converges too. The convergence of the integrals with respect to $\hat{\mu}^{i}_{2,n}$ and ${\mu}^{i}_{2,n}$ follows by weak convergence of the measures.

It remains to show that, for $i=1, \ldots, N,$
\begin{align*}
&\lim_{n \mapsto \infty} \int_{[0,T]\times \mathcal A \times \overline{\mathcal O}_i} m^i_{2,n,t}(da, dx) e^{-\rho
  t}\lambda_i(a)\left[c_p G_i(P^p_t({\boldsymbol{m}}_{1,n}) - e_i P^C_t-f_i P^{k(i)}_t({\boldsymbol{m}}_{1,n}) - x )\right.\nonumber \\  & \left.+c_{op} G_i(P^{op}_t({\boldsymbol{m}}_{1,n}) - e_i P^C_t-f_i P^{k(i)}_t({\boldsymbol{m}}_{1,n}) - x )-\kappa_i \right]dt \nonumber \\ &- K_i \int_{[0,T] \times \overline{\mathcal O}_i} \hat \mu^i_{2,n}(dt, dx) e^{-(\rho+\gamma_i)
  t}  +
  \widetilde K_i \int_{[0,T]\times \mathcal A \times \overline{\mathcal O}_i}  \mu^i_{2,n}(dt, da, dx) e^{-(\rho+\gamma_i) t} \nonumber \\
  &=\int_{[0,T]\times \mathcal A \times \overline{\mathcal O}_i} \widetilde{m}^i_{2,t}(da, dx) e^{-\rho
  t}\lambda_i(a)\left[c_p G_i(P^p_t(\widetilde{{\boldsymbol{m}}}_{1}) - e_i P^C_t-f_i P^{k(i)}_t(\widetilde{{\boldsymbol{m}}}_1) - x )\right.\nonumber \\  & \left.+c_{op} G_i(P^{op}_t(\widetilde{{\boldsymbol{m}}}_1) - e_i P^C_t-f_i P^{k(i)}_t(\widetilde{{\boldsymbol{m}}}_1) - x )-\kappa_i \right]dt \nonumber \\ &- K_i \int_{[0,T] \times \overline{\mathcal O}_i} \widetilde{\hat \mu}^{i}_2(dt, dx) e^{-(\rho+\gamma_i)
  t}  +
  \widetilde K_i \int_{[0,T]\times \mathcal A \times \overline{\mathcal O}_i}  \widetilde{\mu}^{i}_2(dt, da, dx) e^{-(\rho+\gamma_i) t}.
\end{align*}
Recall that, for each $i=1,\ldots,N$, $G_{i}$ is a Lipschitz function with constant $1$. We then obtain
\begin{align*}
&\left|\int_{[0,T]\times \mathcal A \times \overline{\mathcal O}_i} m^i_{2,n,t}(da, dx) e^{-\rho
  t}\lambda_i(a)\left[c_p G_i(P^p_t({\boldsymbol{m}}_{1,n}) - e_i P^C_t-f_i P^{k(i)}_t({\boldsymbol{m}}_{1,n}) - x )\right. \right.\nonumber \\  & \left. \left.+c_{op} G_i(P^{op}_t({\boldsymbol{m}}_{1,n}) - e_i P^C_t-f_i P^{k(i)}_t({\boldsymbol{m}}_{1,n}) - x )-\kappa_i \right]dt \right. \nonumber \\ & \left. - K_i \int_{[0,T]\times \overline{\mathcal O}_i} \hat \mu^i_{2,n}(dt,  dx) e^{-(\rho+\gamma_i)
  t}  +
  \widetilde K_i \int_{[0,T]\times \mathcal A \times \overline{\mathcal O}_i}  \mu^i_{2,n}(dt, da, dx) e^{-(\rho+\gamma_i) t} \right. \nonumber \\
& \left. - \int_{[0,T]\times \mathcal A \times \overline{\mathcal O}_i} \widetilde{m}^i_{2,t}(da, dx) e^{-\rho
  t}\lambda_i(a)\left[c_p G_i(P^p_t(\widetilde{{\boldsymbol{m}}}_{1}) - e_i P^C_t-f_i P^{k(i)}_t(\widetilde{{\boldsymbol{m}}}_1) - x )\right. \right.\nonumber \\  & \left. \left. +c_{op} G_i(P^{op}_t(\widetilde{{\boldsymbol{m}}}_1) - e_i P^C_t-f_i P^{k(i)}_t(\widetilde{{\boldsymbol{m}}}_1) - x )-\kappa_i \right]dt \right. \nonumber \\ & \left. - K_i \int_{[0,T] \times \overline{\mathcal O}_i} \widetilde{\hat \mu}^{i}_2(dt, dx) e^{-(\rho+\gamma_i)
  t}  +
  \widetilde K_i \int_{[0,T]\times \mathcal A \times \overline{\mathcal O}_i}  \widetilde{\mu}^{i}_2(dt, da, dx) e^{-(\rho+\gamma_i) t} \right|
\end{align*}
\begin{align*}
  & \leq  \int_{[0,T]\times \mathcal A \times \overline{\mathcal O}_i} m^i_{2,n,t}(da, dx) e^{-\rho
  t}\lambda_i(a)\Big[c_p |P^p_t({\boldsymbol{m}}_{1,n}) - P^p_t(\widetilde{{\boldsymbol{m}}}_{1})|  \nonumber \\ & +c_{op} |P^{op}_t({\boldsymbol{m}}_{1,n}) - P^{po}_t(\widetilde{{\boldsymbol{m}}}_{1})|+f_i |P^{k(i)}_t({\boldsymbol{m}}_{1,n}) - P^{k(i)}_t({\widetilde{\boldsymbol{m}}}_{1})|\Big] + \nonumber \\ &+\Big|\int_{[0,T]\times \mathcal A \times \overline{\mathcal O}_i} (m^i_{2,n,t}(da, dx)-\widetilde{m}^{i}_{2,t}(da,dx)) e^{-\rho
  t}\lambda_i(a) \\ &\times \Big[c_p G_i(P^p_t(\widetilde{{\boldsymbol{m}}}_{1}) - e_i P^C_t-f_i P^{k(i)}_t(\widetilde{{\boldsymbol{m}}}_1) - x )\nonumber \\  &  +c_{op} G_i(P^{op}_t(\widetilde{{\boldsymbol{m}}}_1) - e_i P^C_t-f_i P^{k(i)}_t(\widetilde{{\boldsymbol{m}}}_1) - x )-\kappa_i \Big]dt \Big| \nonumber \\ 
  &+K_i \Big| \int_{[0,T]\times \overline{\mathcal O}_i} e^{-(\rho+\gamma_i)
  t}(\hat \mu^i_{2,n}(dt, dx)-\widetilde{\hat \mu}^i_{2}(dt, dx)) \Big| \nonumber\\
  &+
  \widetilde K_i \Big|\int_{[0,T]\times \mathcal A \times \overline{\mathcal O}_i} e^{-(\rho+\gamma_i) t} (\mu^i_{2,n}(dt, da, dx)-\widetilde{\mu}^i_{2}(dt, da, dx))\Big|.
\end{align*}
Using similar arguments as in Lemma \ref{BV}, one can show that the maps $t \mapsto h_n(t)$, with $h_n(t)=\int_{ \mathcal A \times \overline{\mathcal O}_i} m^i_{2,n,t}(da, dx) e^{-\rho
  t}\lambda_i(a)$ are of bounded variation, uniformly with respect to $n$, and together with Theorem 3.23 in \cite{ambrosio2000functions}, we obtain that
\begin{align}\label{conv}
\int_{ \mathcal A \times \overline{\mathcal O}_i} m^i_{2,n,\cdot}(da, dx) e^{-\rho
  t}\lambda_i(a) \mapsto \int_{ \mathcal A \times \overline{\mathcal O}_i} \widetilde{m}^i_{2,\cdot}(da, dx) e^{-\rho
  t}\lambda_i(a)
\end{align}
in $L^1([0,T])$.
Furthermore, using again Lemma \ref{conv} item ii. together with the boundedness of the price functionals $P^p$, $P^{op}$ and $P^k$, we derive the convergence to $0$ of the first term from the RHS of the above inequality. The convergence of the third and fourth terms to $0$ is a consequence of the weak convergence of the sequence of measures ${\boldsymbol{m}}_{1,n}$ towards $\widetilde{{\boldsymbol{m}}}_{1}$.
For the second term, we consider a sequence of bounded continuous functions $P_t^{p,m}$ (resp. $P_t^{op,m}$ and $P_t^{k,m}$) approximating in $L^1([0,T])$ the price functionals $P^{p}_t(\widetilde{{\boldsymbol{m}}}_1)$ (resp. $P^{op}_t(\widetilde{{\boldsymbol{m}}}_1)$ and $P^{k,p}_t(\widetilde{{\boldsymbol{m}}}_1)$). Using again the Lipschitz property of the map $G_{i}$, we derive that,
\begin{align*}
&\Big|\int_{[0,T]\times \mathcal A \times \overline{\mathcal O}_i} (m^i_{2,n,t}(da, dx)-\widetilde{m}^{i}_{2,t}(da,dx)) e^{-\rho
  t}\lambda_i(a) \\&\qquad\times \Big[c_p G_i(P^p_t(\widetilde{{\boldsymbol{m}}}_{1}) - e_i P^C_t-f_i P^{k(i)}_t(\widetilde{{\boldsymbol{m}}}_1) - x )\nonumber \\  & \qquad +c_{op} G_i(P^{op}_t(\widetilde{{\boldsymbol{m}}}_1) - e_i P^C_t-f_i P^{k(i)}_t(\widetilde{{\boldsymbol{m}}}_1) - x )-\kappa_i \Big]dt \Big| \nonumber \\ 
& \leq \int_0^T e^{-\rho
  t} \Big|\int_{ \mathcal A \times \overline{\mathcal O}_i} (m^i_{2,n,t}(da, dx)-\widetilde{m}^i_{2,t}(da, dx))\Big| \lambda_i(a)\\&\qquad\times\Big[c_p |P^p_t(\widetilde{{\boldsymbol{m}}_{1}}) - P^{p,m}_t|+c_{op} |P^{op}_t(\widetilde{{\boldsymbol{m}}_{1}}) - P^{op,m}_t|  +f_i |P^{k(i)}_t(\widetilde{{\boldsymbol{m}}_{1}}) - P^{k(i),m}_t|\Big] \nonumber \\ 
& \qquad+\Big|\int_{[0,T]\times \mathcal A \times \overline{\mathcal O}_i} (m^i_{2,n,t}(da, dx)-\widetilde{m}^{i}_{2,t}(da,dx)) e^{-\rho
  t}\lambda_i(a)\\&\qquad\times \Big[c_p G_i(P^{p,m}_t - e_i P^C_t-f_i P^{k(i,m}_t - x )\nonumber \\  & \qquad +c_{op} G_i(P^{op,m}_t - e_i P^C_t-f_i P^{k(i),m}_t - x )-\kappa_i \Big]dt \Big|. 
\end{align*}
The second term in the RHS of the above inequality converges to $0$ in view of the weak convergence of measures, while the convergence  to $0$ of the first term  follows by using again \eqref{conv} and dominated convergence.

\subsection{Proof of Proposition \ref{unique.prop}}
Assume that there are two Nash equilibria, corresponding to families of measures and measure flows
$$
(\hat \mu^i, (\hat m^i_t)_{0\leq t\leq T},\mu^i, ( m^i_t)_{0\leq t\leq T}) \quad \text{and}\quad (\hat {\bar\nu}^i,\hat {\bar\mu}^i, (\hat {\bar m}^i_t)_{0\leq t\leq T}, \bar\nu^i,\bar\mu^i, ( \bar m^i_t)_{0\leq t\leq T})
$$
for $i=1,\dots,N+\overline N$ and price vectors 
$$
P^p,P^{op}, P^1,\dots, P^K \quad \text{and}\quad \overline P^p,\overline P^{op}, \overline P^1,\dots, \overline P^K.
$$
By definition of the Nash equilibrium, we have, for conventional agents, 
\begin{align*}
&\int_{[0,T]\times \mathcal A \times \overline{\mathcal O}_i} m^i_t(da, dx) e^{-\rho
  t}\lambda_i(a)(c_p G_i(P^p_t - e_i P^C_t-f_i P^{k(i)}_t - x )\\ &+c_{op} G_i(P^{op}_t - e_i P^C_t-f_i P^{k(i)}_t - x )-\kappa_i) dt \\ &- K_i \int_{[0,T] \times \overline{\mathcal O}_i} \hat \mu^i(dt, dx) e^{-(\rho+\gamma_i)
  t}
  +
  \widetilde K_i \int_{[0,T]\times \mathcal A \times \overline{\mathcal O}_i}  \mu^i(dt, da, dx) e^{-(\rho+\gamma_i) t}\\
  &\geq \int_{[0,T]\times \mathcal A \times \overline{\mathcal O}_i} \bar m^i_t(da, dx) e^{-\rho
  t}\lambda_i(a)(c_p G_i(P^p_t - e_i P^C_t-f_i P^{k(i)}_t - x )\\ &+c_{op} G_i(P^{op}_t - e_i P^C_t-f_i P^{k(i)}_t - x )-\kappa_i) dt \\ &- K_i \int_{[0,T] \times \overline{\mathcal O}_i} \hat {\bar\mu}^i(dt, dx) e^{-(\rho+\gamma_i)
  t}
  +
  \widetilde K_i \int_{[0,T]\times \mathcal A \times \overline{\mathcal O}_i}  \bar\mu^i(dt, da, dx) e^{-(\rho+\gamma_i) t}
\end{align*}
and 
\begin{align*}
&\int_{[0,T]\times \mathcal A \times \overline{\mathcal O}_i} \bar m^i_t(da, dx) e^{-\rho
  t}\lambda_i(a)(c_p G_i(\overline P^p_t - e_i  P^C_t-f_i \overline P^{k(i)}_t - x )\\ &+c_{op} G_i(\overline P^{op}_t - e_i P^C_t-f_i \overline P^{k(i)}_t - x )-\kappa_i) dt \\ &- K_i \int_{[0,T] \times \overline{\mathcal O}_i} \hat{\bar\mu}^i(dt, dx) e^{-(\rho+\gamma_i)
  t}
  +
  \widetilde K_i \int_{[0,T]\times \mathcal A \times \overline{\mathcal O}_i}  \bar\mu^i(dt, da, dx) e^{-(\rho+\gamma_i) t}\\
  &\geq \int_{[0,T]\times \mathcal A \times \overline{\mathcal O}_i} \bar m^i_t(da, dx) e^{-\rho
  t}\lambda_i(a)(c_p G_i(\overline P^p_t - e_i P^C_t-f_i \overline P^{k(i)}_t - x )\\ &+c_{op} G_i(\overline P^{op}_t - e_i P^C_t-f_i \overline P^{k(i)}_t - x )-\kappa_i) dt \\ &- K_i \int_{[0,T] \times \overline{\mathcal O}_i} \hat {\bar\mu}^i(dt, dx) e^{-(\rho+\gamma_i)
  t}
  +
  \widetilde K_i \int_{[0,T]\times \mathcal A \times \overline{\mathcal O}_i}  \bar\mu^i(dt, da, dx) e^{-(\rho+\gamma_i) t},
\end{align*}
and similarly for the renewable agents. Summing up the two expressions, we get, for conventional agents, 
\begin{multline*}
\int_{[0,T]\times \mathcal A \times \overline{\mathcal O}_i} (m^i_t-\bar m^i_t)(da,dx) e^{-\rho
  t}\lambda_i(a)(c_p G_i(P^p_t - e_i P^C_t-f_i P^{k(i)}_t - x )\\ +c_{op} G_i(P^{op}_t - e_i P^C_t-f_i P^{k(i)}_t - x )-c_p G_i(\overline P^p_t - e_i P^C_t-f_i \overline P^{k(i)}_t - x )\\ -c_{op} G_i(\overline P^{op}_t - e_i P^C_t-f_i \overline P^{k(i)}_t - x )) dt\geq 0, 
\end{multline*}
and for renewable agents,
\begin{multline*}
\int_{[0,T]\times \mathcal A \times \overline{\mathcal O}_i} (m^i_t-\bar m^i_t)(da, dx) e^{-\rho
  t}\bar\lambda_i(a)
  (c_p P^p_t+c_{op}P^{op}_t-c_p \overline P^p_t-c_{op}\overline P^{op}_t) x \,dt\geq 0. 
\end{multline*}
Remark that the terms involving $\mu$ and $\hat \mu$ cancel out in the expressions because they do not depend on the prices. Now, summing up these expressions over all agent types, we get: 
\begin{multline*}
\int_0^T \{G_t(P^p_t, P^{op}_t, P^1_t,\dots,P^K_t) - G_t(\overline P^p_t, \overline P^{op}_t, \overline P^1_t,\dots,\overline P^K_t)\\+ \overline G_t(\overline P^p_t, \overline P^{op}_t, \overline P^1_t,\dots,\overline P^K_t) - \overline G_t(P^p_t, \overline P^{op}_t,  P^1_t,\dots, P^K_t))dt\geq 0,
\end{multline*}
where $G_t$ is the function defined in the proof of Proposition \ref{price.prop}, and $\overline G$ is the same function, but defined with measure flow $\bar m$ instead of $m$. Now, since $(P^p_t, P^{op}_t, P^1_t,\dots,P^K_t)$ is the minimizer of $G_t$, $(P^p_t, \overline P^{op}_t,  P^1_t,\dots, P^K_t)$ is the minimizer of $\overline G_t$ and the two functions are strictly convex, we deduce that the two price vectors are equal, possibly outside a set of zero measure. 
\end{document}